\newtheorem{theorem}{Theorem}[section]
\newtheorem{corollary}[theorem]{Corollary}
\newtheorem{definition}[theorem]{Definition}
\newtheorem{proposition}[theorem]{Proposition}
\newtheorem*{remark}{Remark}
\numberwithin{equation}{section}
\newcommand{\PR}{\mathbb P}
\newcommand{\QR}{\mathbb Q}
\newcommand{\ER}{\mathbb E}
\newcommand{\CR}{\mathbb C}
\begin{document}

\title{Phase transitions in edge-weighted exponential\\ random graphs}

\fancyhf{}

\chead[Mei Yin]{Phase transitions in edge-weighted exponential
random graphs}

\cfoot{\thepage}

\author{Mei Yin}

\address{Department of Mathematics, University of Denver, Denver, CO 80208,
USA} \email {mei.yin@du.edu}

\dedicatory{\rm \today}

\begin{abstract}
The exponential family of random graphs represents an important
and challenging class of network models. Despite their
flexibility, conventionally used exponential random graphs have
one shortcoming. They cannot directly model weighted networks as
the underlying probability space consists of simple graphs only.
Since many substantively important networks are weighted, this
limitation is especially problematic. We extend the existing
exponential framework by proposing a generic common distribution
for the edge weights and rigorously analyze the associated phase
transitions and critical phenomena. We then apply these general
results to get concrete answers in exponential random graph models
where the edge weights are uniformly distributed.
\end{abstract}

\maketitle

\thispagestyle{empty}

\section{Introduction}
\label{intro} Exponential random graphs are among the most
widespread models for real-world networks as they are able to
capture a broad variety of common network tendencies by
representing a complex global structure through a set of tractable
local features. These rather general models are exponential
families of probability distributions over graphs, in which
dependence between the random edges is defined through certain
finite subgraphs, in imitation of the use of potential energy to
provide dependence between particle states in a grand canonical
ensemble of statistical physics. They are particularly useful when
one wants to construct models that resemble observed networks as
closely as possible, but without going into details of the
specific process underlying network formation. For history and a
review of developments, see e.g. Fienberg \cite{F1} \cite{F2},
Frank and Strauss \cite{FS}, H\"{a}ggstr\"{o}m and Jonasson
\cite{HJ}, Newman \cite{N}, and Wasserman and Faust \cite{WF}.

Many of the investigations into exponential random graphs employ
the elegant theory of graph limits as developed by Lov\'{a}sz and
coauthors (V.T. S\'{o}s, B. Szegedy, C. Borgs, J. Chayes, K.
Vesztergombi, ...) \cite{BCLSV1} \cite{BCLSV2} \cite{BCLSV3}
\cite{Lov} \cite{LS}. Building on earlier work of Aldous
\cite{Aldous} and Hoover \cite{Hoover}, the graph limit theory
connects sequences of graphs to a unified graphon space equipped
with a cut metric. Though the theory itself is tailored to dense
graphs, serious attempts have been made at formulating parallel
results for sparse graphs \cite{AL} \cite{BS} \cite{BCCZ1}
\cite{BCCZ2} \cite{CD2}. Since networks are often very large in
size, a pressing objective in studying exponential models is to
understand their asymptotic tendencies. From the point of view of
extremal combinatorics and statistical mechanics, emphasis has
been made on the variational principle of the limiting
normalization constant, concentration of the limiting probability
distribution, phase transitions and asymptotic structures. See
e.g. Aristoff and Zhu \cite{AZ}, Chatterjee and Diaconis
\cite{CD}, Kenyon et al. \cite{KRRS}, Kenyon and Yin \cite{KY},
Lubetzky and Zhao \cite{LZ1} \cite{LZ2}, Radin and Sadun
\cite{RS1} \cite{RS2}, and Radin and Yin \cite{RY}.

Despite their flexibility, conventionally used exponential random
graphs admittedly have one shortcoming. They cannot directly model
weighted networks as the underlying probability space consists of
simple graphs only, i.e., edges are either present or absent.
Since many substantively important networks are weighted, this
limitation is especially problematic. The need to extend the
existing exponential framework is thus justified, and several
generalizations have been proposed \cite{K} \cite{RPW}
\cite{WDBCD}. An alternative interpretation for simple graphs is
such that the edge weights are iid and satisfy a Bernoulli
distribution. This work will instead assume that the iid edge
weights follow a generic common distribution and rigorously
analyze the associated phase transitions and critical phenomena.

The rest of this paper is organized as follows. In Section
\ref{weight} we provide basics of graph limit theory and introduce
key features of edge-weighted exponential random graphs. In
Section \ref{statement} we summarize some important general
results, including a variational principle for the limiting
normalization constant (Theorems \ref{main1} and \ref{main2}) and
an associated concentration of measure (Theorem \ref{main3})
indicating that almost all large graphs lie near the maximizing
set. Theorems \ref{main4} and \ref{gen} then give simplified
versions of these theorems in the ``attractive'' region of the
parameter space where the parameters $\beta_2,...,\beta_k$ are all
non-negative. In Section \ref{app} we specialize to exponential
models where the edge weights are uniformly distributed and show
in Theorem \ref{phase} the existence of a first order phase
transition curve ending in a second order critical point. Lastly,
in Section \ref{discuss} we investigate the asymptotic phase
structure of a directed model where a large deviation principle is
missing.

\section{Edge-weighted exponential random graphs}
\label{weight} Let $G_n\in \mathcal{G}_n$ be an edge-weighted
undirected labeled graph on $n$ vertices, where the edge weights
$x_{ij}$ between vertex $i$ and vertex $j$ are iid real random
variables having a common distribution $\mu$. Any such graph
$G_n$, irrespective of the number of vertices, may be represented
as an element $h^{G_n}$ of a single abstract space $\mathcal{W}$
that consists of all symmetric measurable functions $h(x,y)$ from
$[0,1]^2$ into $\mathbb{R}$ (referred to as ``graph limits'' or
``graphons''), by setting $h^{G_n}(x,y)$ as the edge weight
between vertices $\lceil nx \rceil$ and $\lceil ny \rceil$ of
$G_n$. For a finite simple graph $H$ with vertex set
$V(H)=[k]=\{1,...,k\}$ and edge set $E(H)$ and a simple graph
$G_n$ on $n$ vertices, there is a notion of density of graph
homomorphisms, denoted by $t(H, G_n)$, which indicates the
probability that a random vertex map $V(H) \to V(G_n)$ is
edge-preserving,
\begin{equation}
\label{t} t(H, G_n)=\frac{|\text{hom}(H,
G_n)|}{|V(G_n)|^{|V(H)|}}.
\end{equation}
For a graphon $h\in \mathcal{W}$, define the graphon homomorphism
density
\begin{equation}
\label{tt} t(H, h)=\int_{[0,1]^k}\prod_{\{i,j\}\in E(H)}h(x_i,
x_j)dx_1\cdots dx_k.
\end{equation}
Then $t(H, G_n)=t(H, h^{G_n})$ by construction, and we take
(\ref{tt}) as the definition of graph homomorphism density $t(H,
G_n)$ for an edge-weighted graph $G_n$. This graphon
interpretation enables us to capture the notion of convergence in
terms of subgraph densities by an explicit ``cut distance'' on
$\mathcal{W}$:
\begin{equation}
d_{\square}(f, h)=\sup_{S, T \subseteq [0,1]}\left|\int_{S\times
T}\left(f(x, y)-h(x, y)\right)dx\,dy\right|
\end{equation}
for $f, h \in \mathcal{W}$. The common distribution $\mu$ for the
edge weights yields probability measure $\PR_n$ and the associated
expectation $\ER_n$ on $\mathcal{G}_n$, and further induces
probability measure $\QR_n$ on the space $\mathcal{W}$ under the
graphon representation.

A non-trivial complication is that the topology induced by the cut
metric is well defined only up to measure preserving
transformations of $[0,1]$ (and up to sets of Lebesgue measure
zero), which may be thought of vertex relabeling in the context of
finite graphs. To tackle this issue, an equivalence relation
$\sim$ is introduced in $\mathcal{W}$. We say that $f\sim h$ if
$f(x, y)=h_{\sigma}(x, y):=h(\sigma x, \sigma y)$ for some measure
preserving bijection $\sigma$ of $[0,1]$. Let $\tilde{h}$
(referred to as a ``reduced graphon'') denote the equivalence
class of $h$ in $(\mathcal{W}, d_{\square})$. Since $d_{\square}$
is invariant under $\sigma$, one can then define on the resulting
quotient space $\tilde{\mathcal{W}}$ the natural distance
$\delta_{\square}$ by $\delta_{\square}(\tilde{f},
\tilde{h})=\inf_{\sigma_1, \sigma_2}d_{\square}(f_{\sigma_1},
h_{\sigma_2})$, where the infimum ranges over all measure
preserving bijections $\sigma_1$ and $\sigma_2$, making
$(\tilde{\mathcal{W}}, \delta_{\square})$ into a metric space.
With some abuse of notation we also refer to $\delta_{\square}$ as
the ``cut distance''. After identifying graphs that are the same
after vertex relabeling, the probability measure $\PR_n$ yields
probability measure $\tilde{\PR}_n$ and the associated expectation
$\tilde{\ER}_n$ (which coincides with $\ER_n$). Correspondingly,
the probability measure $\QR_n$ induces probability measure
$\tilde{\QR}_n$ on the space $\tilde{\mathcal{W}}$ under the
measure preserving transformations.

By a $k$-parameter family of exponential random graphs we mean a
family of probability measures $\PR_n^{\beta}$ on $\mathcal{G}_n$
defined by, for $G_n\in\mathcal{G}_n$,
\begin{equation}
\label{pmf} \PR_n^{\beta}(G_n)=\exp\left(n^2\left(\beta_1
t(H_1,G_n)+\cdots+
  \beta_k t(H_k,G_n)-\psi_n^{\beta}\right)\right)\PR_n(G_n),
\end{equation}
where $\beta=(\beta_1,\dots,\beta_k)$ are $k$ real parameters,
$H_1,\dots,H_k$ are pre-chosen finite simple graphs (and we take
$H_1$ to be a single edge), $t(H_i, G_n)$ is the density of graph
homomorphisms, $\PR_n$ is the probability measure induced by the
common distribution $\mu$ for the edge weights, and
$\psi_n^{\beta}$ is the normalization constant,
\begin{equation}
\label{psi} \psi_n^{\beta}=\frac{1}{n^2}\log \ER_n
\left(\exp\left(n^2 \left(\beta_1 t(H_1,G_n)+\cdots+\beta_k
t(H_k,G_n)\right) \right)\right).
\end{equation}
We say that a phase transition occurs when the limiting
normalization constant $\displaystyle
\psi^\beta_\infty:=\lim_{n\to
  \infty}\psi_n^{\beta}$ has a singular point, as it is the
generating function for the limiting expectations of other random
variables,
\begin{equation}
\label{E} \lim_{n\to \infty}\ER_n^\beta t(H_i, G_n)=\lim_{n\to
\infty}\frac{\partial}{\partial
\beta_i}\psi_n^\beta=\frac{\partial}{\partial
\beta_i}\psi_\infty^\beta,
\end{equation}
\begin{equation}
\label{Cov} \lim_{n\to \infty}n^2\left(\CR\textrm{ov}_n^\beta
\left(t(H_i, G_n), t(H_j, G_n)\right)\right)=\lim_{n\to
\infty}\frac{\partial^2}{\partial \beta_i
\partial \beta_j}\psi_n^\beta=\frac{\partial^2}{\partial \beta_i
\partial \beta_j}\psi_\infty^\beta.
\end{equation}
The exchange of limits in (\ref{E}) and (\ref{Cov}) is nontrivial,
but may be justified using similar techniques as in Yang and Lee
\cite{YL}. Since homomorphism densities $t(H_i, G_n)$ are
preserved under vertex relabeling, the probability measure
$\tilde{\PR}_n^\beta$ and the associated expectation
$\tilde{\ER}_n^\beta$ (which coincides with $\ER_n^\beta$) may
likewise be defined.

\begin{definition}
A phase is a connected region of the parameter space $\{\beta\}$,
maximal for the condition that the limiting normalization constant
$\psi_\infty^\beta$ is analytic. There is a $j$th-order transition
at a boundary point of a phase if at least one $j$th-order partial
derivative of $\psi_\infty^\beta$ is discontinuous there, while
all lower order derivatives are continuous.
\end{definition}

More generally, we may consider exponential models where the terms
in the exponent defining the probability measure contain functions
on the graph space other than homomorphism densities. Let $T:
\tilde{\mathcal{W}} \rightarrow \mathbb{R}$ be a bounded
continuous function. Let the probability measure $\PR^T_n$ and the
normalization constant $\psi^T_n$ be defined as in (\ref{pmf}) and
(\ref{psi}), that is,
\begin{equation}
\label{pmf2}
\PR^T_n(G_n)=\exp\left(n^2(T(\tilde{h}^{G_n})-\psi^T_n)\right)\PR_n(G_n),
\end{equation}
\begin{equation}
\label{psi2} \psi^T_n=\frac{1}{n^2}\log \ER_n \exp\left(n^2
T(\tilde{h}^{G_n}) \right),
\end{equation}
Then the probability measure $\tilde{\PR}_n^T$ and the associated
expectation $\tilde{\ER}_n^T$ (which coincides with $\ER_n^T$) may
be defined in a similar manner.

We will assume that the common distribution $\mu$ on the edge
weights has \textit{finite support}, which implies that the
graphon space $\mathcal{W}$ under consideration is a finite subset
of $\mathbb{R}$. These $L^\infty$ graphons generalize graphons
that take values in $[0,1]$ only and are better suited for generic
edge-weighted graphs instead of just simple graphs. The ``finite
support'' assumption also assures that the moment generating
function $M(\theta)=\int e^{\theta x}\mu(dx)$ is finite for all
$\theta$ and the conjugate rate function of Cram\'{e}r, $I:
\mathbb{R}\rightarrow \mathbb{R}$, where
\begin{equation}
\label{I} I(x)=\sup_{\theta\in \mathbb{R}}\left(\theta x-\log
M(\theta)\right)
\end{equation}
is nicely defined. The domain of the function $I$ can be extended
to $\tilde{\mathcal{W}}$ in the usual manner:
\begin{equation}
\label{II} I(\tilde{h})=\int_{[0,1]^2}I(h(x,y))dxdy,
\end{equation}
where $h$ is any representative element of the equivalence class
$\tilde{h}$. It was shown in \cite{CV} that $I$ is well defined on
$\tilde{\mathcal{W}}$ and is lower semi-continuous under the cut
metric $\delta_\square$. The space $(\tilde{\mathcal{W}},
\delta_{\square})$ enjoys many other important properties that are
essential for the study of exponential random graph models. It is
a compact space and homomorphism densities $t(H, \cdot)$ are
continuous functions on it. In fact, homomorphism densities
characterize convergence under the cut metric: a sequence of
graphs converges if and only if its homomorphism densities
converge for all finite simple graphs, and the limiting
homomorphism densities then describe the resulting graphon.

\section{Statement of results}
\label{statement} The normalization constant plays a central role
in statistical mechanics because it encodes essential information
about the structure of the probability measure; even the existence
of its limit bears important consequences. In the case of
exponential random graphs, the computational tools currently used
by practitioners to compute this constant become unreliable for
large networks \cite{BBS} \cite{SPRH}. This problem however can be
circumvented if we know a priori that the limit of the
normalization constant exists. One can then choose a ``scaled
down'' network model with a smaller number of vertices and use the
exact value of the normalization constant in the scaled down model
as an approximation to the normalization constant in the larger
model, and a computer program that can evaluate the exact value of
the normalization constant for moderate sized networks would serve
the purpose \cite{Hunter}. The following Theorem \ref{main1} is a
generalization of the corresponding result (Theorem 3.1) in
Chatterjee and Diaconis \cite{CD}, where they assumed that the
edge weights $x_{ij}$ between vertices $i$ and $j$ are iid real
random variables satisfying a special common distribution --
Bernoulli having values $1$ and $0$ each with probability $1/2$.
For the sake of completeness and to motivate further discussions
in Theorem \ref{main2}, we present the proof details below.

\begin{theorem}
\label{main1} Let $T: \tilde{\mathcal{W}} \rightarrow \mathbb{R}$
be a bounded continuous function. Let $\psi_n^T$ and $I$ be
defined as before (see (\ref{psi2}), (\ref{I}) and (\ref{II})).
Then the limiting normalization constant $\displaystyle
\psi^T_\infty:=\lim_{n\rightarrow \infty}\psi_n^T$ exists, and is
given by
\begin{equation}
\label{setmax} \psi^T_\infty=\sup_{\tilde{h}\in
\tilde{\mathcal{W}}}\left(T(\tilde{h})-\frac{1}{2}I(\tilde{h})\right).
\end{equation}
\end{theorem}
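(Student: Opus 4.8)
The right-hand side of (\ref{setmax}) is a supremum of $T$ penalized by a rate function, so the natural plan is to read $\psi_n^T$ as a Laplace transform and compute its exponential growth via a large deviation principle (LDP) combined with Varadhan's lemma. First I would establish that, under the product measure $\ER_n$ generated by the iid edge weights, the reduced graphon $\tilde{h}^{G_n}$ satisfies an LDP on the compact space $(\tilde{\mathcal{W}}, \delta_\square)$ at speed $n^2$ with rate function $\frac{1}{2}I(\tilde{h})$, where $I$ is the Cram\'{e}r functional of (\ref{I})--(\ref{II}). This is the edge-weighted counterpart of the Chatterjee--Varadhan LDP that underlies the Bernoulli case treated in \cite{CD}, with the Bernoulli relative entropy replaced by the general rate function $I$.

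For the LDP I would argue at the level of the symmetric array $(x_{ij})_{i<j}$ of edge weights, viewed as the step-function graphon $h^{G_n}$. Since the $\binom{n}{2}$ weights are iid with the finite-support law $\mu$, a Cram\'{e}r-type tilting estimate shows that the probability of $h^{G_n}$ resembling a prescribed profile $h$ decays like $\exp(-\tfrac{1}{2}n^2\int_{[0,1]^2}I(h(x,y))\,dx\,dy)$; the prefactor $\tfrac12$ records that there are asymptotically $n^2/2$ independent edges. The substantive work is to promote this array-level statement to an LDP in the cut metric: approximating an arbitrary graphon by coarse block averages, controlling cut distances by those averages, and invoking the lower semicontinuity of $I$ from \cite{CV}. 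Because $I$ is already invariant under measure-preserving bijections, pushing the LDP forward to the quotient $\tilde{\mathcal{W}}$ leaves the rate function equal to $\frac12 I(\tilde{h})$.

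Granting this LDP, the theorem follows from Varadhan's lemma. The finite support of $\mu$ forces the graphon values into a fixed bounded set, $(\tilde{\mathcal{W}}, \delta_\square)$ is compact, and $T$ is bounded and continuous by hypothesis; hence the integrability (tail) hypothesis of Varadhan's lemma holds trivially and $\frac12 I$ is a good rate function. Applying the lemma at speed $n^2$ gives
\[
\lim_{n\to\infty}\frac{1}{n^2}\log \ER_n \exp\!\left(n^2 T(\tilde{h}^{G_n})\right)
=\sup_{\tilde{h}\in\tilde{\mathcal{W}}}\left(T(\tilde{h})-\tfrac12 I(\tilde{h})\right),
\]
which is precisely (\ref{setmax}); in particular the limit $\psi_\infty^T$ exists.

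The main obstacle is the LDP, and within it the mismatch of topologies: the Cram\'{e}r estimates live naturally on the finite-dimensional array in a product sense, whereas the variational principle is a statement on the cut-metric quotient. The delicate point is to show that replacing $h^{G_n}$ by a suitable block approximation changes the cut distance negligibly while costing nothing at the exponential scale $n^2$, and that relabeling contributes nothing to the rate. The properties recalled at the end of Section \ref{weight}---compactness of $(\tilde{\mathcal{W}}, \delta_\square)$ and the fact that continuous homomorphism densities characterize $\delta_\square$-convergence---are exactly what make the upper bound (via a finite cover) and the lower bound (via local finite-dimensional estimates) go through.
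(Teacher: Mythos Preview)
Your strategy is correct, and in fact it is the natural ``textbook'' route: the LDP on $(\tilde{\mathcal{W}},\delta_\square)$ with good rate function $\tfrac12 I$ plus Varadhan's lemma yields (\ref{setmax}) immediately, since $T$ is bounded continuous and the space is compact. There is no gap in your argument.

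The paper, however, organizes the proof differently in two respects. First, it does not re-derive the LDP: the edge-weighted LDP you sketch (with the Cram\'er tilting, block approximation, and passage to the quotient) is already Theorem~1.2 of \cite{CV}, and the paper simply cites it. So the part of your proposal you flag as the ``main obstacle'' is in fact taken off the shelf. Second, rather than invoking Varadhan's lemma as a black box, the paper carries out the covering argument explicitly: it partitions the range of $T$ into $\epsilon$-intervals, applies the LDP upper bound to the closed level sets $\tilde{H}_c=\{c\le T\le c+\epsilon\}$ and the lower bound to the open sets $\tilde{U}_c=\{c<T<c+\epsilon\}$, and sends $\epsilon\to 0$. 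This is, of course, just the standard proof of Varadhan's lemma specialized to this setting. The payoff for doing it by hand is that the same covering estimates are reused immediately afterward to prove Theorem~\ref{main2}, where $T$ is merely bounded and Varadhan's lemma does not apply; your black-box approach would not directly give that extension.
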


\begin{proof}
For each Borel set $\tilde{A} \subseteq \tilde{\mathcal{W}}$ and
each $n$, define
\begin{equation}
\label{n} \tilde{A}^n=\{\tilde{h}\in \tilde{A}:
\tilde{h}=\tilde{h}^{G_n} \text{ for some } G_n\in
\mathcal{G}_n\}.
\end{equation}
Fix $\epsilon>0$. Since $T$ is a bounded function, there is a
finite set $R$ such that the intervals $\{(c, c +\epsilon): c\in
R\}$ cover the range of $T$. Since $T$ is a continuous function,
for each $c\in R$, the set $\tilde{U}_{c}$ consisting of reduced
graphons $\tilde{h}$ with $c<T(\tilde{h})<c+\epsilon$ is an open
set, and the set $\tilde{H}_{c}$ consisting of reduced graphons
$\tilde{h}$ with $c\leq T(\tilde{h})\leq c+\epsilon$ is a closed
set. Let $\tilde{U}_{c}^n$ and $\tilde{H}_{c}^n$ be respectively
defined as in (\ref{n}). The large deviation principle, Theorem
1.2 of \cite{CV}, implies that:
\begin{equation}
\limsup_{n\rightarrow \infty}\frac{\log
\tilde{\QR}_n(\tilde{H}_{c}^n)}{n^2}\leq -\inf_{\tilde{h}\in
\tilde{H}_{c}}\frac{1}{2}I(\tilde{h}),
\end{equation}
and that
\begin{equation}
\liminf_{n\rightarrow \infty}\frac{\log
\tilde{\QR}_n(\tilde{U}_{c}^n)}{n^2}\geq -\inf_{\tilde{h}\in
\tilde{U}_{c}}\frac{1}{2}I(\tilde{h}),
\end{equation}
where $\tilde{Q}_n$ denotes the family of probability measures on
$\tilde{\mathcal{W}}$ that is inherited from the corresponding
probability measures $\tilde{P}_n$ on $\mathcal{G}_n$.

We first consider $\limsup \psi_n^T$.
\begin{equation}
\exp(n^2\psi_n^T)\leq \sum_{c\in
R}\exp(n^2(c+\epsilon))\tilde{\QR}_n(\tilde{H}_{c}^n)\leq
|R|\sup_{c\in
R}\exp(n^2(c+\epsilon))\tilde{\QR}_n(\tilde{H}_{c}^n).
\end{equation}
This shows that
\begin{equation}\label{supce}
\limsup_{n\rightarrow \infty}\psi^{T}_{n}\leq \sup_{c\in
R}\left(c+\epsilon-\inf_{\tilde{h}\in
\tilde{H}_{c}}\frac{1}{2}I(\tilde{h})\right).
\end{equation}
Each $\tilde{h}\in \tilde{H}_{c}$ satisfies $T(\tilde{h})\geq c$.
Consequently,
\begin{equation}
\sup_{\tilde{h}\in
\tilde{H}_{c}}(T(\tilde{h})-\frac{1}{2}I(\tilde{h}))\geq
\sup_{\tilde{h}\in
\tilde{H}_{c}}(c-\frac{1}{2}I(\tilde{h}))=c-\inf_{\tilde{h}\in
\tilde{H}_{c}}\frac{1}{2}I(\tilde{h}).
\end{equation}
Substituting this in (\ref{supce}) gives
\begin{eqnarray}
\limsup_{n\rightarrow \infty}\psi^{T}_{n}&\leq&
\epsilon+\sup_{c\in R}\sup_{\tilde{h}\in
\tilde{H}_{c}}(T(\tilde{h})-\frac{1}{2}I(\tilde{h}))\\\notag&=&\epsilon+\sup_{\tilde{h}\in
\tilde{\mathcal{W}}}(T(\tilde{h})-\frac{1}{2}I(\tilde{h})).
\end{eqnarray}

Next we consider $\liminf \psi^{T}_{n}$.
\begin{equation}
\exp(n^2\psi^{T}_{n})\geq \sup_{c\in R}
\exp(n^2c)\tilde{\QR}_n(\tilde{U}_{c}^n).
\end{equation}
Therefore for each $c\in R$,
\begin{equation}\label{infc}
\liminf_{n \rightarrow \infty}\psi^{T}_{n}\geq
c-\inf_{\tilde{h}\in \tilde{U}_{c}}\frac{1}{2}I(\tilde{h}).
\end{equation}
Each $\tilde{h}\in \tilde{U}_{c}$ satisfies
$T(\tilde{h})<c+\epsilon$. Therefore
\begin{equation}
\sup_{\tilde{h}\in
\tilde{U}_{c}}(T(\tilde{h})-\frac{1}{2}I(\tilde{h}))\leq
\sup_{\tilde{h}\in
\tilde{U}_{c}}(c+\epsilon-\frac{1}{2}I(\tilde{h}))=c+\epsilon-\inf_{\tilde{h}\in
\tilde{U}_{c}}\frac{1}{2}I(\tilde{h}).
\end{equation}
Together with (\ref{infc}), this shows that
\begin{eqnarray}
\liminf_{n \rightarrow \infty}\psi^{T}_{n}&\geq&
-\epsilon+\sup_{c\in R}\sup_{\tilde{h}\in
\tilde{U}_{c}}(T(\tilde{h})-\frac{1}{2}I(\tilde{h}))\\\notag&=&-\epsilon+\sup_{\tilde{h}\in
\tilde{\mathcal{W}}}(T(\tilde{h})-\frac{1}{2}I(\tilde{h})).
\end{eqnarray}

Since $\epsilon$ is arbitrary, this completes the proof.
\end{proof}

When $T$ is not continuous, we lose control of the openness and
closedness of the sets in $\tilde{\mathcal{W}}$, and hence can not
resort to the large deviation principle of \cite{CV} to find an
explicit characterization of the limiting normalization constant
$\psi^T_\infty$, but we can still conclude that this important
limit exists under mild conditions.

\begin{theorem}
\label{main2} Let $T: \tilde{\mathcal{W}} \rightarrow \mathbb{R}$
be a bounded function. Let $I$ and $\psi_n^T$ be defined as before
(see (\ref{psi2}), (\ref{I}) and (\ref{II})). Then the limiting
normalization constant $\psi^T_\infty$ exists.
\end{theorem}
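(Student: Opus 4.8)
The plan is to prove existence of the limit by a self-similarity (Fekete-type) argument tailored to the fact that the measures $\tilde{\QR}_n$ are the empirical-graphon laws of \emph{iid} edge weights. The large deviation principle of \cite{CV} by itself is not enough: for a merely bounded $T$ the level-set decomposition of Theorem \ref{main1} only yields the one-sided estimate $\sup_{\tilde h}(\underline T(\tilde h)-\tfrac12 I(\tilde h))\le\liminf_n\psi_n^T\le\limsup_n\psi_n^T\le\sup_{\tilde h}(\overline T(\tilde h)-\tfrac12 I(\tilde h))$ coming from the lower and upper semicontinuous envelopes $\underline T\le T\le\overline T$, and these extremes need not agree. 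Since $\PR_n$ is a probability measure and $T$ is bounded we have $\inf T\le\psi_n^T\le\sup T$ for all $n$, so both the upper and lower limits are finite and the task reduces to proving $\limsup_n\psi_n^T\le\liminf_n\psi_n^T$. The $n^2$-normalization in (\ref{psi2}) dictates the right comparison: writing $Z_n=\ER_n\exp(n^2 T(\tilde h^{G_n}))$, the goal is to show that for each fixed $n$ one has $\psi_N^T\ge\psi_n^T-o_N(1)$ as $N\to\infty$, for then $\liminf_N\psi_N^T\ge\sup_n\psi_n^T\ge\limsup_N\psi_N^T$ and the limit exists (and equals $\sup_n\psi_n^T$).

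To produce such an inequality I would split $[N]$ into $k=\lfloor N/n\rfloor$ consecutive blocks of $n$ vertices. Restricted to a diagonal block the weight array of $G_N$ is an independent copy of $G_n$, and, because under the exchangeable iid law a fixed block is distributed as a uniform $n$-vertex subsample, the sampling lemma of graph limit theory guarantees that each block graphon is within cut distance $o(1)$ of $\tilde h^{G_N}$ with overwhelming probability. Lower bounding $Z_N$ by the contribution of configurations in which all $k$ diagonal blocks are simultaneously ``favorable'' for $T$ at a common level $v$ and are cut-close to the whole, and using the independence of the blocks to factor the corresponding probability as $\PR_n(T(\tilde h^{G_n})\approx v)^k$, I would obtain $\log Z_N\gtrsim N^2 v+k\log\PR_n(T(\tilde h^{G_n})\approx v)$; since $\log\PR_n(\cdot)\le 0$ and $k\le k^2$, optimizing over $v$ gives $\log Z_N\gtrsim k^2\log Z_n$, that is, $\psi_N^T\gtrsim\psi_n^T-o_N(1)$. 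A Fekete-type passage as in the previous paragraph then closes the argument.

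The step that I expect to be the main obstacle is the transfer of cut-distance closeness to closeness of the \emph{values} of $T$: the inequality $T(\tilde h^{G_N})\ge v-\eta$ on the favorable event, which is what lets a block's $T$-level propagate to the whole graph, is immediate from the continuity modulus of $T$ when $T$ is continuous but genuinely fails for an arbitrary bounded $T$, since two graphons at cut distance $o(1)$ may have very different $T$-values. I would attempt to circumvent this by trapping $T$ between continuous surrogates---using the sup-norm stability $|\psi_n^T-\psi_n^S|\le\|T-S\|_\infty$, valid uniformly in $n$ because $\exp(n^2 T)\le e^{n^2\|T-S\|_\infty}\exp(n^2 S)$ pointwise---and by combining this with the lower semicontinuity of $I$ and the concentration of $\tilde{\QR}_n$ to argue that the configurations on which $T$ and its surrogate disagree carry negligible exponential weight. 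Making this reduction precise for a general bounded $T$, rather than merely a continuous one, is the delicate heart of the proof and the reason the conclusion is anticipated only under mild regularity of $T$.
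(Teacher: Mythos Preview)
Your proposal takes a route entirely different from the paper's. The paper makes no use of block decompositions, sampling lemmas, or Fekete-type superadditivity. It simply repeats the covering step from Theorem~\ref{main1}: choose a finite $R$ so that the intervals $(c,c+\epsilon)$ cover the range of $T$, set $\tilde U_c=\{c<T<c+\epsilon\}$, and record directly (with no LDP input and no continuity used) the two-sided sandwich
\[
\sup_{c\in R}\Bigl[c+\tfrac{1}{n^{2}}\log\tilde\QR_n(\tilde U_c^{\,n})\Bigr]
\;\le\;\psi_n^T\;\le\;
\frac{\log|R|}{n^{2}}+\epsilon+\sup_{c\in R}\Bigl[c+\tfrac{1}{n^{2}}\log\tilde\QR_n(\tilde U_c^{\,n})\Bigr].
\]
Because the \emph{same} quantity $a_n:=\sup_{c}\bigl[c+n^{-2}\log\tilde\QR_n(\tilde U_c^{\,n})\bigr]$ appears on both sides, the paper concludes in one line that $\limsup_n\psi_n^T-\liminf_n\psi_n^T\le\epsilon$ and lets $\epsilon\downarrow0$. (One may observe that this last inference, as written, tacitly presumes that $a_n$ itself has a limit, which is not argued; but that is the paper's line of reasoning, and it is in any case a completely different idea from yours.)

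Your blocking scheme, by contrast, has a genuine gap at exactly the point you isolate: the step ``all $n$-blocks have $T\approx v$ and are $\delta_\square$-close to $\tilde h^{G_N}$, hence $T(\tilde h^{G_N})\ge v-\eta$'' is precisely a continuity statement for $T$ and fails for an arbitrary bounded $T$. Your proposed remedy---trapping $T$ between continuous surrogates and invoking $|\psi_n^T-\psi_n^S|\le\|T-S\|_\infty$---does not close it, because a general bounded Borel function on the compact metric space $(\tilde{\mathcal W},\delta_\square)$ is \emph{not} uniformly approximable by continuous functions; $\|T-S\|_\infty$ cannot be made small, and any weaker mode of approximation forfeits the uniform-in-$n$ control that the stability bound provides. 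The ``negligible exponential weight'' idea does not help either: whether a $\delta_\square$-small set carries negligible $\PR_n^T$-mass depends on $T$ itself, so you would be assuming the structure you are trying to establish. Your closing sentence---that the conclusion is ``anticipated only under mild regularity of $T$''---effectively concedes that the argument does not prove Theorem~\ref{main2} as stated.
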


\begin{proof}
Since $T$ is a bounded function, there is a finite set $R$ such
that the intervals $\{(c, c +\epsilon): c\in R\}$ cover the range
of $T$. Similarly as in the proof of Theorem \ref{main1}, we have
\begin{equation}
\exp(n^2\psi_n^T)\leq |R|\sup_{c\in
R}\exp(n^2(c+\epsilon))\tilde{\QR}_n(\tilde{U}_{c}^n),
\end{equation}
and
\begin{equation}
\exp(n^2\psi^{T}_{n})\geq \sup_{c\in R}
\exp(n^2c)\tilde{\QR}_n(\tilde{U}_{c}^n).
\end{equation}
This implies that
\begin{equation}
\limsup_{n\rightarrow \infty}\psi^T_n-\liminf_{n\rightarrow
\infty}\psi^T_n\leq \frac{\log |R|}{n^2}+\epsilon.
\end{equation}
Since $\epsilon$ is arbitrary, this completes the proof.
\end{proof}

\begin{remark}
The boundedness condition on $T$ can be relaxed even further.
$(\log |R|)/(n^2) \rightarrow 0$ is sufficient.
\end{remark}

Let $\tilde{H}$ be the subset of $\tilde{\mathcal{W}}$ where
$\psi_\infty^T$ is maximized. By the compactness of
$\tilde{\mathcal{W}}$, the continuity of $T$ and the lower
semi-continuity of $I$, $\tilde{H}$ is a nonempty compact set. The
set $\tilde{H}$ encodes important information about the
exponential model (\ref{pmf2}) and helps to predict the behavior
of a typical random graph sampled from this model. The following
Theorem \ref{main3} states that in the large $n$ limit, the
quotient image $\tilde{h}^{G_n}$ of a random graph $G_n$ drawn
from (\ref{pmf2}) must lie close to $\tilde{H}$ with probability
$1$. Especially, if $\tilde{H}$ is a singleton set, the theorem
gives a law of large numbers for $G_n$; while if $\tilde{H}$ is
not a singleton set, the theorem points to the existence of a
first order phase transition.

\begin{theorem}
\label{main3} Let $\tilde{H}$ be defined as in the above
paragraph. Let $\PR_n^T$ (\ref{pmf2}) be the probability measure
on $\mathcal{G}_n$. Then
\begin{equation}
\lim_{n\rightarrow \infty}\delta_\square(\tilde{h}^{G_n},
\tilde{H})=0 \text{ almost surely}.
\end{equation}
\end{theorem}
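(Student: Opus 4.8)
The plan is to show that for each fixed $\eta>0$ the probability (under $\PR_n^T$) that $\tilde{h}^{G_n}$ sits at cut-distance at least $\eta$ from the maximizing set $\tilde{H}$ decays like $e^{-cn^2}$, and then to upgrade this exponential decay to almost sure convergence via the first Borel--Cantelli lemma. I regard the draws $G_n\sim\PR_n^T$ as independent across $n$, defined on a single product probability space, so that Borel--Cantelli applies directly.

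First I would fix $\eta>0$ and introduce the closed ``bad'' set $\tilde{B}_\eta=\{\tilde{h}\in\tilde{\mathcal{W}}:\delta_\square(\tilde{h},\tilde{H})\geq\eta\}$, which is compact (being a closed subset of the compact space $\tilde{\mathcal{W}}$) and disjoint from $\tilde{H}$. Since $T$ is continuous and $I$ is lower semi-continuous, the functional $T-\frac{1}{2}I$ is upper semi-continuous, hence attains its maximum on the compact set $\tilde{B}_\eta$. Because $\tilde{H}$ is precisely the set on which $T-\frac{1}{2}I$ equals its global supremum $\psi_\infty^T$, and $\tilde{B}_\eta\cap\tilde{H}=\emptyset$, the restricted maximum is strictly below $\psi_\infty^T$; I write $\psi_\infty^T-\sup_{\tilde{h}\in\tilde{B}_\eta}(T(\tilde{h})-\frac{1}{2}I(\tilde{h}))=2\delta>0$.

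Next I would rerun the $\limsup$ half of the proof of Theorem \ref{main1}, intersecting every set with $\tilde{B}_\eta$. Covering the range of $T$ by the intervals $(c,c+\epsilon)$ with $c\in R$ and applying the large deviation upper bound of \cite{CV} to each closed set $\tilde{H}_c\cap\tilde{B}_\eta$ (using $T(\tilde{h})\geq c$ there, exactly as in Theorem \ref{main1}), then letting $\epsilon\to 0$, yields
\begin{equation}
\limsup_{n\to\infty}\frac{1}{n^2}\log\ER_n\left(\exp\left(n^2 T(\tilde{h}^{G_n})\right)\mathbf{1}_{\{\tilde{h}^{G_n}\in\tilde{B}_\eta\}}\right)\leq\sup_{\tilde{h}\in\tilde{B}_\eta}\left(T(\tilde{h})-\frac{1}{2}I(\tilde{h})\right)=\psi_\infty^T-2\delta.
\end{equation}
Dividing this restricted partition function by the full normalization $\exp(n^2\psi_n^T)$ and invoking $\psi_n^T\to\psi_\infty^T$ from Theorem \ref{main1}, I obtain $\PR_n^T(\tilde{h}^{G_n}\in\tilde{B}_\eta)\leq\exp(-\delta n^2)$ for all sufficiently large $n$, the factor of two absorbing the $o(1)$ errors.

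Finally, since $\sum_n\exp(-\delta n^2)<\infty$, Borel--Cantelli guarantees that almost surely $\tilde{h}^{G_n}\notin\tilde{B}_\eta$ for all but finitely many $n$, that is, $\delta_\square(\tilde{h}^{G_n},\tilde{H})<\eta$ eventually; running $\eta$ through a sequence tending to $0$ then gives the stated almost sure convergence. I expect the main obstacle to be the second step, namely securing the strict gap $2\delta>0$: it rests on the upper semi-continuity of $T-\frac{1}{2}I$ and the compactness of $\tilde{\mathcal{W}}$, which together ensure the supremum over the compact set $\tilde{B}_\eta$ is attained and therefore lies strictly below $\psi_\infty^T$. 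Once this gap is established, the exponential rate and the summability needed for Borel--Cantelli follow routinely from the large deviation upper bound already employed in the proof of Theorem \ref{main1}.
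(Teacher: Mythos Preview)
Your proposal is correct and follows essentially the same approach as the paper: both establish the exponential decay $\PR_n^T(\delta_\square(\tilde{h}^{G_n},\tilde{H})>\eta)\leq Ce^{-\gamma n^2}$ and then invoke Borel--Cantelli on a product space. The paper's proof is only a sketch that cites Chatterjee--Diaconis \cite{CD} for the exponential bound, whereas you spell out that step explicitly (strict gap via upper semi-continuity on the compact set $\tilde{B}_\eta$, then rerun the $\limsup$ argument of Theorem~\ref{main1} restricted to $\tilde{B}_\eta$); this is exactly the argument behind the cited result, so the two proofs coincide.
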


\begin{proof}
The proof of this theorem follows a similar line of reasoning as
in the proof of the corresponding result (Theorem 3.2) in
Chatterjee and Diaconis \cite{CD}. The key observation is that the
distance between the graph and the maximizing set decays
exponentially as the number of vertices of the graph grows. For
any $\eta>0$ there exist $C, \gamma>0$ such that for all $n$ large
enough,
\begin{equation}
\PR_n^T(\delta_\square(\tilde{h}^{G_n}, \tilde{H})>\eta)\leq
Ce^{-n^2\gamma}.
\end{equation}
This is stronger than ordinary convergence in probability, which
only shows that the probability decays to zero but does not give
the speed. Utilizing the fast exponential decay rate, we can then
resort to probability estimates in the product space. By
Borel-Cantelli,
\begin{equation}
\sum_{n=1}^\infty \PR_n^T(\delta_\square(\tilde{h}^{G_n},
\tilde{H})>\eta)<\infty \text{ implies that }
\PR_n^T(\delta_\square(\tilde{h}^{G_n}, \tilde{H})>\eta \text{
infinitely often})=0.
\end{equation}
The conclusion hence follows.
\end{proof}

When the bounded continuous function $T$ on $\tilde{\mathcal{W}}$
is given by the sum of graph homomorphism densities
$T=\sum_{i=1}^k \beta_i t(H_i, \cdot)$, the statement of Theorems
\ref{main1} and \ref{main3} may be simplified in the
``attractive'' region of the parameter space where the parameters
$\beta_2,...,\beta_k$ are all non-negative, as seen in the
following Theorems \ref{main4} and \ref{gen}.

\begin{theorem}
\label{main4} Consider a general $k$-parameter exponential random
graph model (\ref{pmf}). Suppose $\beta_2,...,\beta_k$ are
non-negative. Then the limiting normalization constant
$\displaystyle \psi_\infty^\beta$ exists, and is given by
\begin{equation}
\label{lmax} \psi_{\infty}^{\beta}=\sup_{u}\left(\beta_1
u^{e(H_1)}+\cdots+\beta_k u^{e(H_k)}-\frac{1}{2}I(u)\right)\\,
\end{equation}
where $e(H_i)$ is the number of edges in $H_i$, $I$ is the
Cram\'{e}r function (\ref{I}) (\ref{II}), and the supremum is
taken over all $u$ in the domain of $I$, i.e., where $I<\infty$.
\end{theorem}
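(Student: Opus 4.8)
The plan is to deduce this from the general variational principle of Theorem \ref{main1}. First I would verify its hypotheses for $T=\sum_{i=1}^k\beta_i t(H_i,\cdot)$: since $\mu$ has finite support, every graphon in $\tilde{\mathcal{W}}$ is bounded by the largest atom in absolute value, so each $t(H_i,h)$ is bounded, and the homomorphism densities $t(H_i,\cdot)$ are continuous on $(\tilde{\mathcal{W}},\delta_\square)$ as recorded in Section \ref{weight}. Hence $T$ is bounded and continuous, and Theorem \ref{main1} applies to give
\[
\psi_\infty^\beta=\sup_{\tilde{h}\in\tilde{\mathcal{W}}}\Big(\sum_{i=1}^k\beta_i t(H_i,h)-\tfrac12 I(\tilde{h})\Big).
\]
The whole task then reduces to showing that this supremum is attained on the one-parameter family of constant graphons.

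One inequality is immediate. Restricting the supremum to constant graphons $h\equiv u$ with $u$ in the domain of $I$ gives $t(H_i,h)=u^{e(H_i)}$ and $I(\tilde{h})=\int_{[0,1]^2}I(u)=I(u)$, whence $\psi_\infty^\beta\ge\sup_u\big(\sum_i\beta_i u^{e(H_i)}-\tfrac12 I(u)\big)$. The substance of the theorem is the reverse inequality, namely that no non-constant graphon can beat the best constant, and this is exactly where the sign restriction $\beta_2,\dots,\beta_k\ge 0$ must enter.

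For the reverse inequality I would bound the higher-order terms by a pointwise functional. The key tool is the generalized H\"{o}lder (Finner) inequality, which for any finite simple graph $H$ and any $h\in\mathcal{W}$ yields $|t(H,h)|\le\int_{[0,1]^2}|h(x,y)|^{e(H)}\,dx\,dy$ (assign the exponent $e(H)$ to each of the $e(H)$ edge-factors). Because $\beta_i\ge 0$ for $i\ge 2$ while $H_1$ is a single edge, this gives
\[
\sum_{i=1}^k\beta_i t(H_i,h)-\tfrac12 I(\tilde h)\le\int_{[0,1]^2}\Big(\beta_1 h(x,y)+\sum_{i\ge2}\beta_i|h(x,y)|^{e(H_i)}-\tfrac12 I(h(x,y))\Big)dx\,dy.
\]
The right-hand side is the average of a single-variable function of $h(x,y)$, hence at most its pointwise supremum over the range of $h$, which lies in the domain of $I$. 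This collapses the graphon variational problem to a one-dimensional optimization and matches the claimed formula up to replacing $u^{e(H_i)}$ by $|u|^{e(H_i)}$.

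The main obstacle is precisely this discrepancy. The two agree when $e(H_i)$ is even, and, more importantly, whenever the optimizing value is non-negative, which is automatic when the support of $\mu$ — and hence the domain of $I$ — lies in $[0,\infty)$, as in the uniformly weighted models of Section \ref{app}. In the general real-valued case one must additionally argue that the one-dimensional maximizer can be taken with the correct sign, so that $|u|^{e(H_i)}=u^{e(H_i)}$ there, or replace the crude H\"{o}lder bound by a sign-aware estimate for the odd subgraphs. By contrast, existence of the limit, compactness of $\tilde{\mathcal{W}}$, and the measurability bookkeeping are all routine once Theorem \ref{main1} is in hand, so the parity and sign analysis for odd $H_i$ is the real content of the argument.
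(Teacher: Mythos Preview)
Your approach is the same one the paper points to: both invoke Chatterjee--Diaconis (Theorem 4.1 there), and the mechanism in that result is precisely the generalized H\"older/Finner bound $|t(H,h)|\le\int|h|^{e(H)}$ followed by the pointwise supremum, exactly as you outline. The paper's proof is only a one-line sketch; you have in fact written out more of the argument than the paper does.

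Two points of comparison are worth noting. First, the paper singles out \emph{convexity of $I$} as ``the crucial step,'' whereas your H\"older-plus-pointwise reduction never uses convexity at all. On inspection the CD argument does not need it for this inequality either; convexity is relevant only insofar as it guarantees that $I$ is a bona fide rate function on an interval. So the discrepancy is one of emphasis, not substance. Second, the sign issue you flag---that H\"older yields $|u|^{e(H_i)}$ rather than $u^{e(H_i)}$ when the support of $\mu$ meets $(-\infty,0)$ and some $e(H_i)$ is odd---is a genuine gap in the general statement that the paper's sketch does not address. Your diagnosis is correct: the inequality $t(H,h)\le\int h^{e(H)}$ can fail for signed $h$ when $e(H)$ is odd, so the upper bound really does require either $\mathrm{supp}\,\mu\subseteq[0,\infty)$ (as in Section~\ref{app}) or a separate argument for odd subgraphs. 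The paper's applications all live in the non-negative regime, so this lacuna is harmless there, but you are right to isolate it as the nontrivial residual step.
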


\begin{proof}
The proof of this theorem follows a similar line of reasoning as
in the proof of the corresponding result (Theorem 4.1) in
Chatterjee and Diaconis \cite{CD}, where the exact form of $I$ for
the Bernoulli distribution was given. The crucial step is
recognizing that $I$ is convex. So though the exact form of $I$ is
not always obtainable for a generic common distribution $\mu$, the
log of the moment generating function $M(\theta)$ is convex and
$I$, being the Legendre transform of a convex function, must also
be convex.
\end{proof}

\begin{theorem}
\label{gen} Let $G_n$ be an exponential random graph drawn from
(\ref{pmf}). Suppose $\beta_2,...,\beta_k$ are non-negative. Then
$G_n$ behaves like an Erd\H{o}s-R\'{e}nyi graph $G(n, u^*)$ in the
large n limit, where $u^*$ is picked randomly from the set $U$ of
maximizers of (\ref{lmax}).
\end{theorem}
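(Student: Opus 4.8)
The plan is to read off the maximizing set in Theorem \ref{main3} from the scalar reduction in Theorem \ref{main4}, and then transport the almost sure concentration there into a statement about $G_n$ itself. First I would note that, because $\beta_2,\dots,\beta_k\ge 0$, Theorem \ref{main4} collapses the supremum in (\ref{setmax}) to the one dimensional supremum (\ref{lmax}); the mechanism behind this collapse, driven by the convexity of $I$, is that the optimizers of (\ref{setmax}) are exactly the constant graphons. Consequently the compact maximizing set $\tilde H$ of Theorem \ref{main3} equals $\{\tilde u^\ast:u^\ast\in U\}$, where $U$ is the maximizer set of (\ref{lmax}) and $\tilde u^\ast$ is the equivalence class of the constant graphon with value $u^\ast$. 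The inclusion $\supseteq$ holds because each such constant already realizes the value $\psi_\infty^\beta$, while $\subseteq$ follows because the convexity argument underlying Theorem \ref{main4} excludes non-constant maximizers.

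Next I would feed this description of $\tilde H$ into Theorem \ref{main3}, obtaining $\delta_\square(\tilde h^{G_n},\tilde H)\to 0$ almost surely. Since convergence in the cut metric is equivalent to convergence of all homomorphism densities, proximity of $\tilde h^{G_n}$ to a particular constant $\tilde u^\ast$ says exactly that $t(H,G_n)\to (u^\ast)^{e(H)}$ for every finite simple graph $H$, i.e. that $G_n$ agrees with the Erd\H{o}s--R\'{e}nyi graph $G(n,u^\ast)$ at the level of subgraph densities. When $U=\{u^\ast\}$ is a singleton this finishes the proof at once: $\tilde H$ is a single point, Theorem \ref{main3} sharpens to a law of large numbers $\tilde h^{G_n}\to\tilde u^\ast$, and $G_n$ behaves like $G(n,u^\ast)$.

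The delicate case, and the main obstacle, is that of several maximizers. The distinct constants in $\tilde H$ are pairwise separated, since $\delta_\square(\tilde u_1,\tilde u_2)=|u_1-u_2|$, so for all large $n$ the almost sure convergence of Theorem \ref{main3} confines $\tilde h^{G_n}$ to a small ball around a single $\tilde u^\ast$ rather than allowing it to drift between competitors. Which ball is selected is governed by the fluctuations of the model and is therefore random, which is the content of ``$u^\ast$ picked randomly from $U$'' and the graph-theoretic signature of the first order transition noted after Theorem \ref{main3}. The hard part is not the cut-metric-to-Erd\H{o}s--R\'{e}nyi dictionary, which is routine, but this commitment to a single value; I would secure it from the strict separation of the competing constants together with the almost sure, rather than merely in probability, nature of the concentration in Theorem \ref{main3}.
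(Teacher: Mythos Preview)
Your proposal is correct and follows exactly the paper's approach: the paper's entire proof is the single sentence ``The assertions in this theorem are direct consequences of Theorems \ref{main3} and \ref{main4},'' and your plan simply unpacks what that sentence means---identify $\tilde H$ with the constant graphons $\{\tilde u^\ast:u^\ast\in U\}$ via Theorem \ref{main4}, then invoke the concentration of Theorem \ref{main3}. Your additional commentary on the multiple-maximizer case and the cut-metric dictionary is helpful elaboration, not a different strategy.
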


\begin{proof}
The assertions in this theorem are direct consequences of Theorems
\ref{main3} and \ref{main4}.
\end{proof}

\section{An application: Uniformly distributed edge weights}
\label{app} Consider the set $\mathcal{G}_n$ of all edge-weighted
undirected labeled graphs on $n$ vertices, where the edge weights
$x_{ij}$ between vertex $i$ and vertex $j$ are iid real random
variables uniformly distributed on $(0,1)$. The common
distribution for the edge weights yields probability measure
$\PR_n$ and the associated expectation $\ER_n$ on $\mathcal{G}_n$.
Give the set of such graphs the probability
\begin{equation}
\PR_n^\beta(G_n)=\exp\left(n^2\left(\beta_1 t(H_1, G_n)+\beta_2
t(H_2, G_n)-\psi_n^\beta\right)\right)\PR_n(G_n),
\end{equation}
where $\beta=(\beta_1, \beta_2)$ are $2$ real parameters, $H_1$ is
a single edge, $H_2$ is a finite simple graph with $p\geq 2$
edges, and $\psi_n^\beta$ is the normalization constant,
\begin{equation}
\label{dpsi} \psi_n^\beta=\frac{1}{n^2}\log
\ER_n\left(\exp\left(n^2\left(\beta_1 t(H_1, G_n)+\beta_2 t(H_2,
G_n)\right)\right)\right).
\end{equation}
The associated expectation $\ER_n^\beta$ may be defined
accordingly, and a phase transition occurs when the limiting
normalization constant $\displaystyle \psi^\beta_\infty=\lim_{n\to
  \infty}\psi_n^{\beta}$ has a singular point.

\begin{theorem}
\label{phase} For any allowed $H_2$, the limiting normalization
constant $\psi_\infty^\beta$ is analytic at all $(\beta_1,
\beta_2)$ in the upper half-plane $(\beta_2\geq 0)$ except on a
certain decreasing curve $\beta_2=r(\beta_1)$ which includes the
endpoint $(\beta_1^c, \beta_2^c)$. The derivatives
$\frac{\partial}{\partial \beta_1}\psi_\infty^\beta$ and
$\frac{\partial}{\partial \beta_2}\psi_\infty^\beta$ have (jump)
discontinuities across the curve, except at the end point where,
however, all the second derivatives $\frac{\partial^2}{\partial
\beta_1^2}\psi_\infty^\beta$, $\frac{\partial^2}{\partial \beta_1
\partial \beta_2}\psi_\infty^\beta$ and $\frac{\partial^2}{\partial
\beta_2^2}\psi_\infty^\beta$ diverge.
\end{theorem}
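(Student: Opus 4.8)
The plan is to reduce the two-dimensional analyticity question to a single-variable variational problem and then carry out a Landau-type analysis of its critical points. Since $\beta_2\geq 0$ places us in the attractive region and $e(H_1)=1$, $e(H_2)=p$, Theorem \ref{main4} gives
\begin{equation}
\psi_\infty^\beta=\sup_{u\in(0,1)}L(u;\beta),\qquad L(u;\beta):=\beta_1 u+\beta_2 u^p-\tfrac12 I(u),
\end{equation}
where $I$ is the Cram\'er rate function of the uniform law on $(0,1)$. I would first record the qualitative features of $I$ that drive everything: $I$ is smooth and strictly convex on $(0,1)$ with $I''>0$, it blows up at the endpoints (so every maximizer is interior), and it is symmetric about $u=1/2$ because $U(0,1)$ is invariant under $x\mapsto 1-x$; in particular $I(1/2)=I'(1/2)=0$ and $I'''(1/2)=0$. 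Although $I$ has no closed form, these structural properties are all I need. By (\ref{E}), at a maximizer $u^*$ one has $\partial\psi_\infty^\beta/\partial\beta_1=u^*$ and $\partial\psi_\infty^\beta/\partial\beta_2=(u^*)^p$, so the analyticity, jumps, and divergences of $\psi_\infty^\beta$ are governed entirely by the behavior of $u^*$ as a function of $\beta$.

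Next I would analyze the critical point equation $\partial_u L=0$, rewritten as $\beta_1=g(u)$ with $g(u):=\tfrac12 I'(u)-p\beta_2 u^{p-1}$. For fixed $\beta_2$ the number of maximizers is controlled by the monotonicity of $g$, via
\begin{equation}
g'(u)=\tfrac12 I''(u)-p(p-1)\beta_2 u^{p-2}=-\,\partial_u^2 L.
\end{equation}
When $\beta_2$ is small the convex term $\tfrac12 I''$ dominates, $g$ is strictly increasing, and for every $\beta_1$ there is a unique nondegenerate maximizer, which depends analytically on $\beta$ by the implicit function theorem; hence $\psi_\infty^\beta$ is analytic. When $\beta_2$ is large the concave competitor pushes $g$ down in the interior so that $g$ rises, falls, then rises again (an N-shape), creating an interval of $\beta_1$ on which $\beta_1=g(u)$ has three roots: two local maxima of $L$ flanking a local minimum. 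The threshold between the two regimes is the critical value $\beta_2^c$, characterized by the simultaneous vanishing
\begin{equation}
g'(u_c)=0,\qquad g''(u_c)=0,
\end{equation}
i.e. $u_c$ is where $g'$ attains a vanishing minimum. Eliminating $\beta_2^c$ yields $u_c I'''(u_c)=(p-2)I''(u_c)$, an equation in $u_c$ alone; for $p=2$ symmetry forces $u_c=1/2$, and for general $p$ I would show this has a unique root in $(0,1)$, which fixes $\beta_2^c$ and, through $\beta_1^c=g(u_c)$, the endpoint $(\beta_1^c,\beta_2^c)$.

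For $\beta_2>\beta_2^c$ I would define the first-order transition curve $\beta_2=r(\beta_1)$ by the Maxwell equal-height condition that the two local maxima $u_-<u_+$ satisfy $L(u_-;\beta)=L(u_+;\beta)$. Off this curve the global maximizer is the unique nondegenerate one and varies analytically, so $\psi_\infty^\beta$ is analytic there; as $\beta$ crosses the curve the global maximizer jumps from $u_+$ to $u_-$, so $\partial\psi_\infty^\beta/\partial\beta_1=u^*$ and $\partial\psi_\infty^\beta/\partial\beta_2=(u^*)^p$ acquire matching jump discontinuities, a first-order transition. Stronger attraction (larger $\beta_2$) shifts the transition to smaller $\beta_1$, making $r$ decreasing, and continuity of the construction shows the curve terminates at $(\beta_1^c,\beta_2^c)$ as the two maxima coalesce. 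At that endpoint the jump closes but a higher singularity appears: differentiating $\beta_1=g(u^*)$ implicitly gives $\partial u^*/\partial\beta_1=1/g'(u^*)=-1/\partial_u^2 L$, and since $\partial_u^2 L(u_c)=0$ there, $\partial^2\psi_\infty^\beta/\partial\beta_1^2=\partial u^*/\partial\beta_1\to\infty$; the analogous computation for $\partial u^*/\partial\beta_2$ makes all three second derivatives diverge, consistent with the cubic scaling $u^*-u_c\sim(\beta-\beta^c)^{1/3}$ forced by $\partial_u^2 L=\partial_u^3 L=0$ at the critical point. The main obstacle I anticipate is exactly the part that cannot be read off a formula: since $I',I'',I'''$ are only defined implicitly through the Legendre transform, verifying that $g$ has precisely the claimed N-shape, that $u_c I'''(u_c)=(p-2)I''(u_c)$ has a unique solution, and that $r$ is single-valued and monotone all require quantitative sign and monotonicity estimates on the derivatives of $I$ rather than explicit evaluation.
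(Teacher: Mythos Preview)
Your outline is correct and mirrors the paper's strategy almost exactly: reduce via Theorem~\ref{main4} to the one-variable problem $\sup_u L(u;\beta)$, classify parameters by the sign structure of $\partial_u^2 L$, locate the V-shaped bistability region, define the first-order curve by the equal-height (Maxwell) condition, and read off the jump in $\partial_{\beta_i}\psi_\infty^\beta=u^*,(u^*)^p$ and the divergence of second derivatives from $\partial_u^2 L\to 0$ at the endpoint. The paper's additional ingredient is precisely what you flag as the obstacle: rather than estimate $I',I'',I'''$ directly, it exploits the Legendre duality $I''(u)\cdot (\log M)''(\theta)=1$ with $u=(\log M)'(\theta)$ to transfer every needed monotonicity statement to explicit rational-in-$e^\theta$ functions built from $M(\theta)=(e^\theta-1)/\theta$. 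Thus the minimum of your $g'$ (the paper's $m(u)=I''(u)/(2p(p-1)u^{p-2})$) becomes the maximum of an explicit $n(\theta)$, and your critical-point equation $u_cI'''(u_c)=(p-2)I''(u_c)$ is solved by matching $n'(\theta_0)=0$ with an analogous explicit identity; the paper even verifies that $n'=0$ and the companion condition coincide at the same $\theta_0$. For the monotonicity of $r$, the paper gives a cleaner argument than your heuristic: it compares the positive and negative areas under $l'(u)$ and notes that an upward shift from increasing $\beta_1$ is exactly compensated by a downward shift from decreasing $\beta_2$. The final statements about jump discontinuities and diverging second partials are not re-derived but cited from Radin--Yin, so your implicit-function computation of $\partial u^*/\partial\beta_i=-1/\partial_u^2L$ actually goes slightly beyond what the paper writes out.
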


\begin{corollary}
For any allowed $H_2$, the parameter space $\{(\beta_1, \beta_2):
\beta_2\geq 0\}$ consists of a single phase with a first order
phase transition across the indicated curve $\beta_2=r(\beta_1)$
and a second order phase transition at the critical point
$(\beta_1^c, \beta_2^c)$.
\end{corollary}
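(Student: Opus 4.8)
The plan is to reduce Theorem~\ref{phase} to one-dimensional variational calculus and then run a bifurcation analysis of the resulting free-energy function. Since $H_1$ is a single edge, $H_2$ has $p\ge 2$ edges, and we are in the attractive region $\beta_2\ge 0$, Theorem~\ref{main4} gives
\[
\psi_\infty^\beta=\sup_{u}\ell(u),\qquad \ell(u)=\beta_1 u+\beta_2 u^{p}-\tfrac12 I(u),
\]
where $I$ is the Cram\'er rate function of the uniform law on $(0,1)$. First I would record the properties of $I$ that are needed: it is analytic and strictly convex on $(0,1)$ with $I''>0$, it vanishes only at the mean $u=\tfrac12$, its derivative $I'$ increases strictly from $-\infty$ to $+\infty$, and $I(u)\to+\infty$ as $u\to 0^+$ or $u\to 1^-$. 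Hence $\ell\to-\infty$ at both endpoints, so every maximizer is an interior critical point solving the stationarity equation $\ell'(u)=\beta_1+\beta_2 p\,u^{p-1}-\tfrac12 I'(u)=0$.

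Second, I would establish analyticity wherever the global maximizer is unique and nondegenerate. If $u^*=u^*(\beta)$ is the unique maximizer and $\ell''(u^*)<0$, the implicit function theorem applied to the stationarity equation makes $u^*$ analytic in $\beta$, so $\psi_\infty^\beta$ is analytic there; the envelope relations (consistent with (\ref{E})) give $\partial_{\beta_1}\psi_\infty=u^*$ and $\partial_{\beta_2}\psi_\infty=(u^*)^{p}$, and differentiating the stationarity equation yields
\[
\partial^2_{\beta_1}\psi_\infty=\frac{-1}{\ell''(u^*)},\quad \partial^2_{\beta_1\beta_2}\psi_\infty=\frac{-p(u^*)^{p-1}}{\ell''(u^*)},\quad \partial^2_{\beta_2}\psi_\infty=\frac{-p^2(u^*)^{2(p-1)}}{\ell''(u^*)}.
\]
Thus non-analyticity can arise only where the global maximizer jumps between two competing maxima of equal height, or where the global maximizer degenerates ($\ell''(u^*)=0$). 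The first produces jump discontinuities in $u^*=\partial_{\beta_1}\psi_\infty$ and in $\partial_{\beta_2}\psi_\infty$, a first order transition; the second forces all three second derivatives above to diverge, a second order critical point. This reduces the theorem to locating these two phenomena.

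Third comes the bifurcation analysis. Solving $\ell'(u)=0$ and $\ell''(u)=0$ together --- two equations linear in $(\beta_1,\beta_2)$ --- parametrizes the \emph{spinodal} locus of degenerate critical points by the critical location $u$, via
\[
\beta_2(u)=\frac{I''(u)}{2p(p-1)u^{p-2}},\qquad \beta_1(u)=\tfrac12 I'(u)-\frac{u\,I''(u)}{2(p-1)}.
\]
Here $\beta_2(u)>0$, so the whole structure sits in the upper half-plane. A short computation shows the tangent $(\beta_1'(u),\beta_2'(u))$ vanishes exactly when $\ell'''(u)=0$, so the spinodal has a cusp at the unique such $u^c$, defining $(\beta_1^c,\beta_2^c)$. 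Inside the cusped region $\ell$ has two local maxima $u_1<u_2$ separated by a local minimum, and a unique maximizer outside. I would then define the first order curve $\beta_2=r(\beta_1)$ by the Maxwell equal-height condition $\ell(u_1)=\ell(u_2)$ and show it lies strictly inside the spinodal region, terminating at the cusp. Crucially, on the spinodal away from the cusp the degenerate critical point is a merging local max--min (a horizontal inflection) that is dominated by a distinct nondegenerate global maximum, so $\psi_\infty$ stays analytic there; hence the sole non-analyticity in $\{\beta_2\ge 0\}$ is $r$ together with its endpoint. Across $r$ the global maximizer jumps from $u_1$ to $u_2$, and a Clausius--Clapeyron computation $r'(\beta_1)=-(u_2-u_1)/(u_2^{p}-u_1^{p})<0$ shows $r$ is decreasing; at the cusp $u_1,u_2\to u^c$ with $\ell''(u^c)=0$, so the second derivatives diverge. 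The corollary is then immediate.

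The hardest step is the global solution-counting underpinning this picture: that for every admissible $H_2$ and all $\beta_2\ge 0$ the stationarity equation has at most three roots (so $\ell$ has at most two local maxima), and that the spinodal has a single cusp. Since the roots of $\ell''$ are the solutions of $I''(u)/u^{p-2}=2p(p-1)\beta_2$, this reduces to showing $u\mapsto I''(u)/u^{p-2}$ is unimodal --- diverging at both endpoints with one interior minimum --- so that each level is attained at most twice. For $p=2$ this is immediate from the convex ``U'' shape of $I''$; for general $p\ge 2$ it needs a quantitative bound on the growth of $I''$, which I would extract from the explicit transform $M(\theta)=(e^\theta-1)/\theta$ of the uniform law together with $\theta=I'(u)$. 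Establishing this unimodality, and with it the monotonicity of $r$ and uniqueness of the cusp, is precisely where the explicit form of the uniform Cram\'er function --- rather than mere convexity --- is needed, and is the main obstacle.
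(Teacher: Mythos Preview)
Your plan is correct and follows the same skeleton as the paper: reduce via Theorem~\ref{main4} to the one-dimensional problem $\sup_u \ell(u)$ with $\ell(u)=\beta_1 u+\beta_2 u^p-\tfrac12 I(u)$, then classify the maximizers of $\ell$ as $(\beta_1,\beta_2)$ varies. The envelope identities, the second-derivative formulas, and the identification of the first-order curve by a Maxwell equal-height condition are all in agreement with what the paper does (though the paper phrases the latter as an equal-area condition for $\ell'$ between the two local maxima).

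The difference is one of language and of how the hard step is attacked. You cast the analysis in standard bifurcation-theory terms --- spinodal, cusp, Maxwell, Clausius--Clapeyron --- and parametrize the spinodal by $u$, locating the cusp at $\ell'''=0$. The paper instead works directly with the auxiliary functions $m(u)=I''(u)/(2p(p-1)u^{p-2})$ and $f(u)=uI''(u)/(2(p-1))-\tfrac12 I'(u)$ and, crucially, passes to the Legendre-dual variable $\theta=I'(u)$ to turn these into the explicit rational-exponential functions $n(\theta)$ and $g(\theta)$ built from $M(\theta)=(e^\theta-1)/\theta$. This is precisely the ``quantitative bound'' you say you would extract; the paper carries it out and shows (partly analytically, partly via numerical verification that $n'(\theta)=0$ has a unique root) that $m$ has a single interior minimum and $f$ a single interior minimum at the same point $u_0$, which is exactly your unimodality of $I''(u)/u^{p-2}$ and uniqueness of the cusp. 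So you have correctly identified both the obstacle and the tool; the paper's contribution is to make that Legendre-dual computation concrete. Your Clausius--Clapeyron argument for the monotonicity of $r$ is a clean alternative to the paper's area-shift argument.
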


\begin{figure}
\centering
\includegraphics[clip=true, height=3.5in]{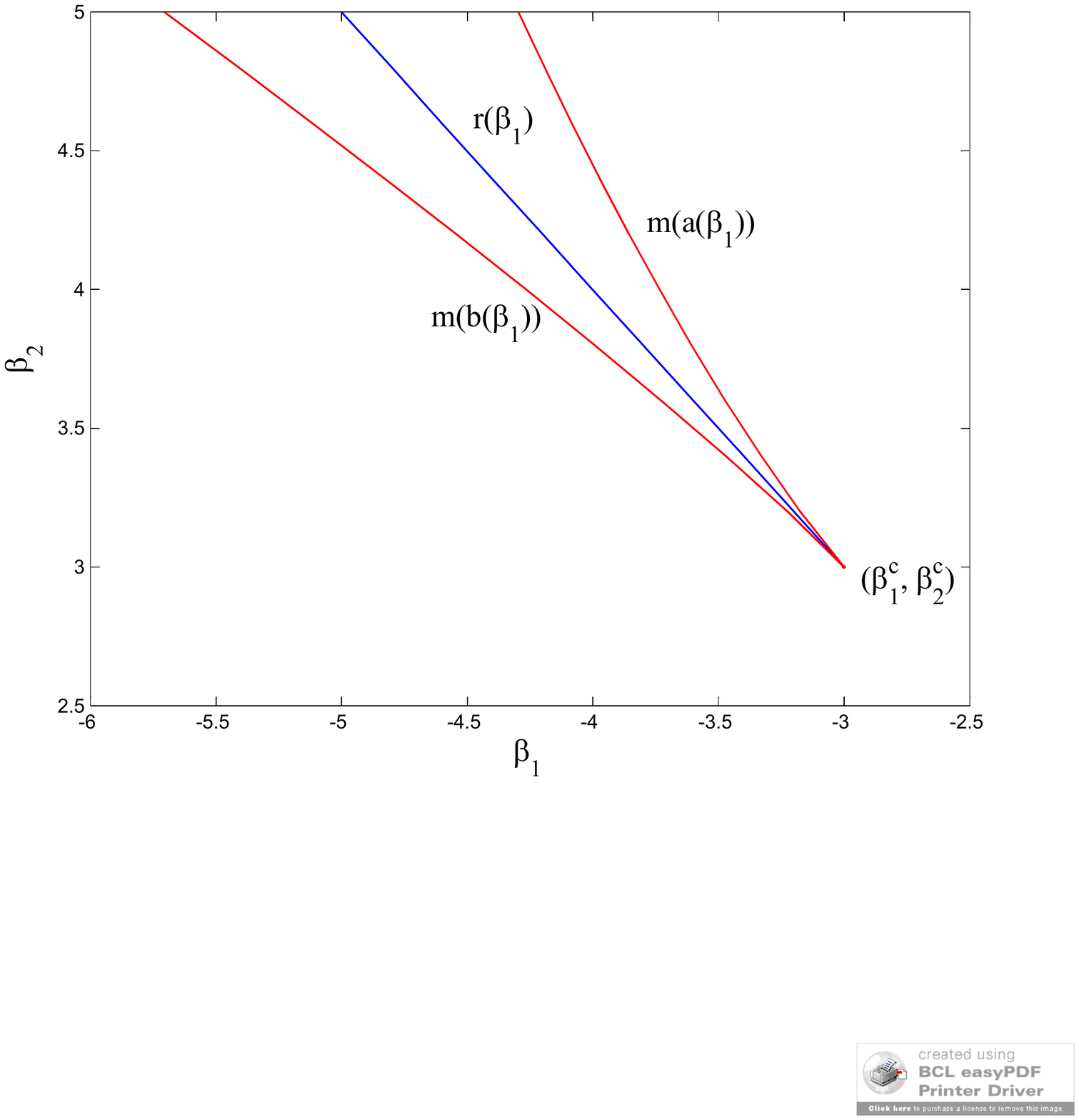}
\caption{The V-shaped region (with phase transition curve
$r(\beta_1)$ inside) in the $(\beta_1, \beta_2)$ plane. Graph
drawn for $p=2$.} \label{Vshape}
\end{figure}

\begin{proof}
The moment generating function for the uniform $(0,1)$
distribution is given by $M(\theta)=(e^\theta-1)/\theta$. The
associated Cram\'{e}r function $I$ is finite on $(0,1)$,
\begin{equation}
\label{Iu} I(u)=\sup_{\theta\in \mathbb{R}}\left(\theta u-\log
\frac{e^\theta-1}{\theta}\right),
\end{equation}
but does not admit a closed-form expression. As shown in Theorem
\ref{main4}, the limiting normalization constant
$\psi_\infty^\beta$ exists and is given by
\begin{equation}
\label{smax} \psi_\infty^\beta=\sup_{0\leq u \leq 1}
\left(\beta_1u+\beta_2u^p-\frac{1}{2}I(u)\right).
\end{equation}
A significant part of computing phase boundaries for the
$2$-parameter exponential model is then a detailed analysis of the
calculus problem (\ref{smax}). However, as straightforward as it
sounds, since the exact form of $I$ is not obtainable, getting a
clear picture of the asymptotic phase structure is not that easy
and various tricks need to be employed.

Consider the maximization problem for $l(u; \beta_1,
\beta_2)=\beta_1u+\beta_2u^p-\frac{1}{2}I(u)$ on the interval $[0,
1]$, where $-\infty<\beta_1<\infty$ and $0\leq \beta_2<\infty$ are
parameters. The location of maximizers of $l(u)$ on the interval
$[0, 1]$ is closely related to properties of its derivatives
$l'(u)$ and $l''(u)$,
\begin{equation}
l'(u)=\beta_1+p\beta_2u^{p-1}-\frac{1}{2}I'(u),
\end{equation}
\begin{equation*}
l''(u)=p(p-1)\beta_2u^{p-2}-\frac{1}{2}I''(u).
\end{equation*}

\begin{figure}
\centering
\includegraphics[clip=true, width=6in, height=2.5in]{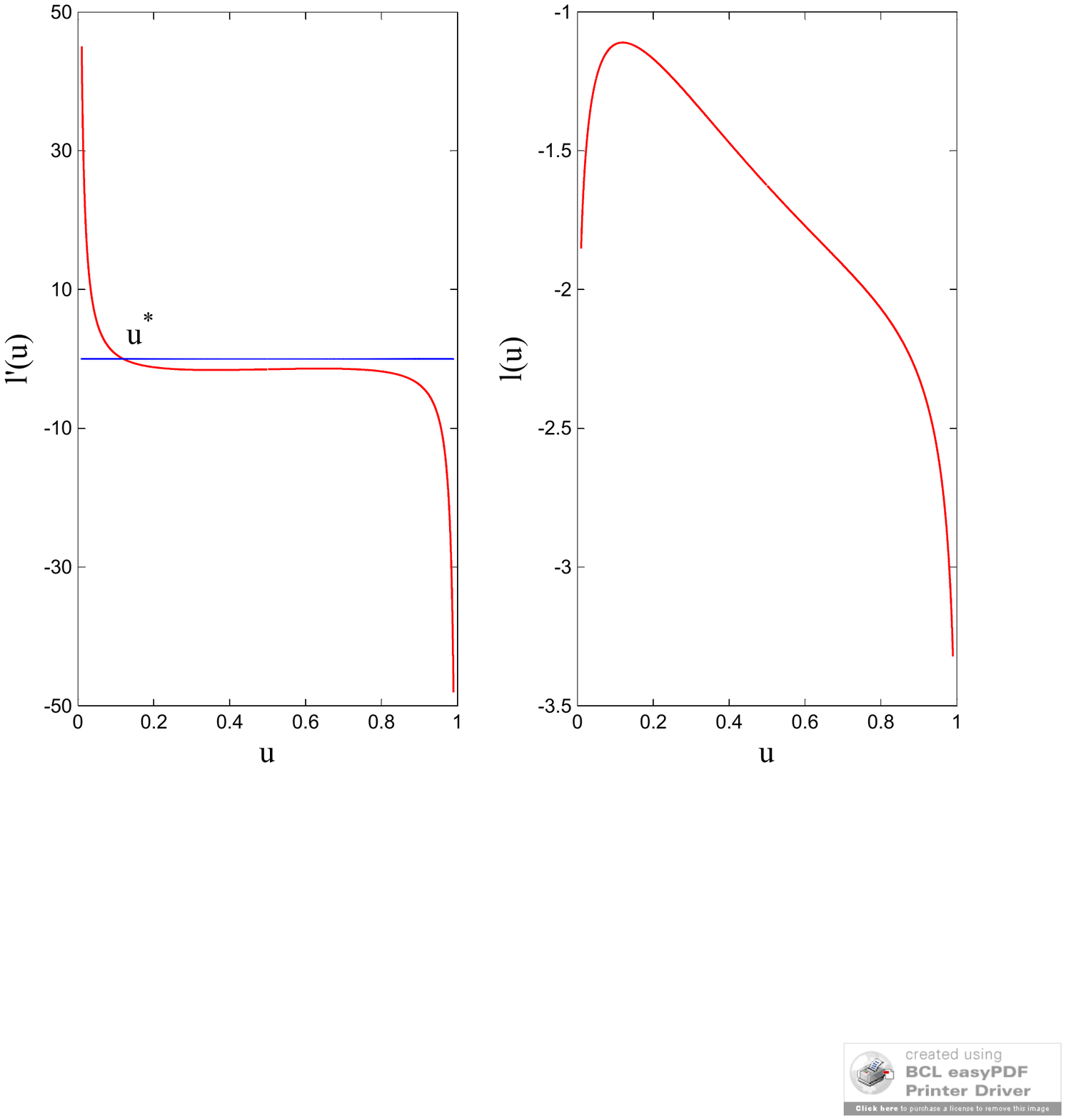}
\caption{Outside the V-shaped region, $l(u)$ has a unique local
maximizer (hence global maximizer) $u^*$. Graph drawn for
$\beta_1=-5$, $\beta_2=3.5$, and $p=2$.} \label{below}
\end{figure}

Following the duality principle for the Legendre transform between
$I(u)$ and $\log M(\theta)$ \cite{ZRM}, we first analyze
properties of $l''(u)$ on the interval $[0, 1]$. Recall that
\begin{equation}
I(u)+\log \frac{e^\theta-1}{\theta}=\theta u,
\end{equation}
where $\theta$ and $u$ are implicitly related. Taking derivatives,
we have
\begin{equation}
\label{dual} u=\left.\left(\log
\frac{e^\theta-1}{\theta}\right)\right|_\theta',
\end{equation}
\begin{equation*}
I''(u)\cdot \left.\left(\log
\frac{e^\theta-1}{\theta}\right)\right|_\theta''=1.
\end{equation*}
Consider the function
\begin{equation}
m(u)=\frac{I''(u)}{2p(p-1)u^{p-2}}
\end{equation}
on $[0, 1]$. By (\ref{dual}), analyzing properties of $m(u)$
translates to analyzing properties of the function
\begin{eqnarray}
n(\theta)&=&2p(p-1)\left.\left(\log
\frac{e^\theta-1}{\theta}\right)\right|_\theta''\cdot
\left(\left.\left(\log
\frac{e^\theta-1}{\theta}\right)\right|_\theta'\right)^{p-2}\\\notag
&=&2p(p-1)\left(\frac{e^{2\theta}-e^\theta(\theta^2+2)+1}{(e^\theta-1)^2\theta^2}\right)\left(\frac{e^\theta(\theta-1)+1}{(e^\theta-1)\theta}\right)^{p-2}
\end{eqnarray}
over $\mathbb{R}$, where $m(u)n(\theta)=1$ and $u$ and $\theta$
satisfy the dual relationship. We recognize that
\begin{equation}
\lim_{\theta\rightarrow -\infty}n(\theta)=0,
\end{equation}
\begin{equation*}
\lim_{\theta\rightarrow
0}n(\theta)=\frac{p(p-1)}{3}\left(\frac{1}{2}\right)^{p-1},
\end{equation*}
\begin{equation*}
\lim_{\theta\rightarrow \infty}n(\theta)=0,
\end{equation*}
which implies that $n(\theta)$ achieves a finite global maximum.
We claim that the global maximum can only be attained at
$\theta_0\geq 0$. This would further imply that there is a finite
global minimum for $m(u)$, and it can only be attained at $u_0\geq
\frac{1}{2}$. First suppose $p=2$. Then $n'(\theta_0)=0$ easily
implies that $\theta_0=0$. Now suppose $p>2$. Then since
$n'(\theta_0)=0$, we have
\begin{equation}
\label{compare}
\left.\left(\frac{e^{2\theta}-e^\theta(\theta^2+2)+1}{(e^\theta-1)^2\theta^2}\right)\right|_{\theta_0}'\left.\left(\frac{e^\theta(\theta-1)+1}{(e^\theta-1)\theta}\right)\right|_{\theta_0}
=-(p-2)\left.\left(\frac{e^{2\theta}-e^\theta(\theta^2+2)+1}{(e^\theta-1)^2\theta^2}\right)^2\right|_{\theta_0}<0.
\end{equation}
This says that $\theta_0$ is on the portion of $\mathbb{R}$ where
$\left(e^{2\theta}-e^\theta(\theta^2+2)+1\right)/\left((e^\theta-1)^2\theta^2\right)$
is decreasing, i.e., $\theta_0>0$. As $\theta$ goes from $0$ to
$\infty$, the left hand side of (\ref{compare}) first decreases
from $0$ then increases to $0$ and is always negative, whereas the
right hand side of (\ref{compare}) monotonically increases from
$-(p-2)/144$ to $0$ and approaches $0$ at a rate faster than the
left hand side, and so there exists a $\theta_0$ which makes both
sides equal. Our numerical computations show that $n'(\theta_0)=0$
is uniquely defined, and $n(\theta)$ increases from $-\infty$ to
$\theta_0$ and decreases from $\theta_0$ to $\infty$. See Table
\ref{table1} for some of these critical values.

\begin{table}
\begin{center}
\begin{tabular}{ccccc}
$p$ & $\theta_0$ & $n(\theta_0)$ & $u_0$ & $m(u_0)$ \\
\hline \hline \\
$2$ & $0$ & $0.3333$ & $0.5$ & $3$ \\
$3$ & $1.3251$ & $0.5575$ & $0.6073$ & $1.7937$ \\
$5$ & $2.9869$ & $0.8324$ & $0.7183$ & $1.2014$ \\
$10$ & $5.6256$ & $1.0894$ & $0.8259$ & $0.9180$ \\ \\
\end{tabular}
\end{center}
\caption{Critical values for $m(u)$ and $n(\theta)$ as a function
of $p$.} \label{table1}
\end{table}

The above analysis shows that for $\beta_2\leq m(u_0)$,
$l''(u)\leq 0$ over the entire interval $[0, 1]$; while for
$\beta_2>m(u_0)$, $l''(u)$ takes on both positive and negative
values, and we denote the transition points by $u_1$ and $u_2$
($u_1<u_0<u_2$). Properties of $l''(u)$ on $[0, 1]$ entails
properties of $l'(u)$ over the same interval. For $\beta_2 \leq
m(u_0)$, $l'(u)$ is monotonically decreasing. For
$\beta_2>m(u_0)$, $l'(u)$ is decreasing from $0$ to $u_1$,
increasing from $u_1$ to $u_2$, and then decreasing again from
$u_2$ to $1$.

The analytic properties of $l''(u)$ and $l'(u)$ help us
investigate properties of $l(u)$ on the interval $[0, 1]$. Being
the Legendre transform of a smooth function, $I(u)$ is a smooth
function, and grows unbounded when $u$ approaches $0$ or $1$. By
the duality principle for the Legendre transform, $I'(u)=\theta$
where $\theta$ and $u$ are linked through (\ref{dual}), so
$I'(0)=-\infty$ and $I'(1)=\infty$. This says that $l(u)$ is a
smooth function, $l(0)=l(1)=-\infty$, $l'(0)=\infty$ and
$l'(1)=-\infty$, so $l(u)$ can not be maximized at $0$ or $1$. For
$\beta_2\leq m(u_0)$, $l'(u)$ crosses the $u$-axis only once,
going from positive to negative. Thus $l(u)$ has a unique local
maximizer (hence global maximizer) $u^*$. For $\beta_2>m(u_0)$,
the situation is more complicated. If $l'(u_1)\geq 0$ (resp.
$l'(u_2)\leq 0$), $l(u)$ has a unique local maximizer (hence
global maximizer) at a point $u^*>u_2$ (resp. $u^*<u_1$). See
Figures \ref{below} and \ref{above}. If $l'(u_1)<0<l'(u_2)$, then
$l(u)$ has two local maximizers $u_1^*$ and $u_2^*$, with
$u_1^*<u_1<u_0<u_2<u_2^*$.

\begin{figure}
\centering
\includegraphics[clip=true, width=6in, height=2.5in]{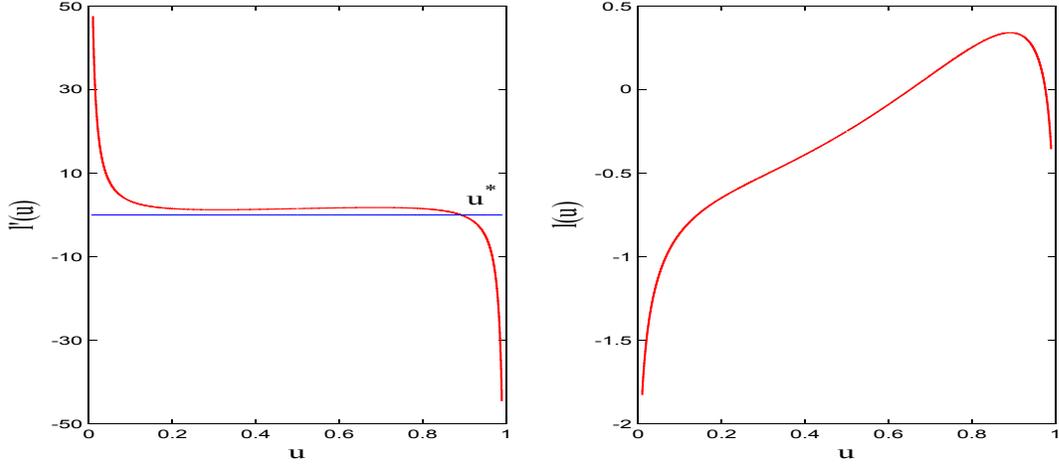}
\caption{Outside the V-shaped region, $l(u)$ has a unique local
maximizer (hence global maximizer) $u^*$. Graph drawn for
$\beta_1=-2.5$, $\beta_2=4$, and $p=2$.} \label{above}
\end{figure}

Let
\begin{equation}
f(u)=\frac{uI''(u)}{2(p-1)}-\frac{1}{2}I'(u)
\end{equation}
so that $l'(u_1)=\beta_1+f(u_1)$ and $l'(u_2)=\beta_1+f(u_2)$.
Again by the duality principle for the Legendre transform,
analyzing properties of the function $f(u)$ on $[0, 1]$ translates
to analyzing properties of the function
\begin{equation}
g(\theta)=\frac{\theta\left(e^\theta-1\right)\left(e^\theta(\theta-1)+1\right)}{2(p-1)\left(e^{2\theta}-e^\theta(\theta^2+2)+1\right)}-\frac{1}{2}\theta
\end{equation}
over $\mathbb{R}$, where $f(u)=g(\theta)$ and $u$ and $\theta$
satisfy the dual relationship. We recognize that
\begin{equation}
\lim_{\theta\rightarrow -\infty}g(\theta)=\infty,
\end{equation}
\begin{equation*}
\lim_{\theta\rightarrow 0}g(\theta)=\frac{3}{p-1},
\end{equation*}
\begin{equation*}
\lim_{\theta\rightarrow \infty}g(\theta)=\infty,
\end{equation*}
which implies that $g(\theta)$ achieves a finite global minimum.
We claim that the global minimum can only be attained at the same
unique $\theta_0$ that makes $n'(\theta)=0$. This can be checked
by identifying the condition for $g'(\theta)=0$. For $p=2$,
$g'(\theta_0)=0$ easily implies that $\theta_0=0$. For $p>2$,
since $g'(\theta_0)=0$, we have
\begin{multline}
\left.\left(\frac{e^\theta(\theta-1)+1}{(e^\theta-1)\theta}\right)\right|_{\theta_0}'\left.\left(\frac{e^{2\theta}-e^\theta(\theta^2+2)+1}{(e^\theta-1)^2\theta^2}\right)\right|_{\theta_0}-
\left.\left(\frac{e^{2\theta}-e^\theta(\theta^2+2)+1}{(e^\theta-1)^2\theta^2}\right)\right|_{\theta_0}'\left.\left(\frac{e^\theta(\theta-1)+1}{(e^\theta-1)\theta}\right)\right|_{\theta_0}
\\=(p-1)\left.\left(\frac{e^{2\theta}-e^\theta(\theta^2+2)+1}{(e^\theta-1)^2\theta^2}\right)^2\right|_{\theta_0},
\end{multline}
which yields the same solution as (\ref{compare}) after a simple
transformation. Thus $g(\theta)$ decreases from $-\infty$ to
$\theta_0$ and increases from $\theta_0$ to $\infty$. This further
implies that there is a finite global minimum for $f(u)$ attained
at $u_0$, and $f(u)$ decreases from $0$ to $u_0$ and increases
from $u_0$ to $1$. Both $\theta_0$ and $u_0$ coincide with the
critical values listed in Table \ref{table1}. See Table
\ref{table2}. We conclude that $l'(u_1)\geq 0$ for $\beta_1\geq
-f(u_0)$ and $\beta_2=m(u_0)$. The only possible region in the
$(\beta_1, \beta_2)$ plane where $l'(u_1)<0<l'(u_2)$ is bounded by
$\beta_1<-f(u_0):=\beta_1^c$ and $\beta_2>m(u_0):=\beta_2^c$.

\begin{table}
\begin{center}
\begin{tabular}{ccccc}
$p$ & $\theta_0$ & $g(\theta_0)$ & $u_0$ & $f(u_0)$ \\
\hline \hline \\
$2$ & $0$ & $3$ & $0.5$ & $3$ \\
$3$ & $1.3251$ & $1.3222$ & $0.6073$ & $1.3222$ \\
$5$ & $2.9869$ & $0.1059$ & $0.7183$ & $0.1059$ \\
$10$ & $5.6256$ & $-1.1723$ & $0.8259$ & $-1.1723$ \\ \\
\end{tabular}
\end{center}
\caption{Critical values for $f(u)$ and $g(\theta)$ as a function
of $p$.} \label{table2}
\end{table}

We examine the behavior of $l'(u_1)$ and $l'(u_2)$ more closely
when $\beta_1$ and $\beta_2$ are chosen from this region. Recall
that $u_1<u_0<u_2$. By monotonicity of $f(u)$ on the intervals
$(0, u_0)$ and $(u_0, 1)$, there exist continuous functions
$a(\beta_1)$ and $b(\beta_1)$ of $\beta_1$, such that $l'(u_1)<0$
for $u_1>a(\beta_1)$ and $l'(u_2)>0$ for $u_2>b(\beta_1)$. As
$\beta_1\rightarrow -\infty$, $a(\beta_1)\rightarrow 0$ and
$b(\beta_1)\rightarrow 1$. $a(\beta_1)$ is an increasing function
of $\beta_1$, whereas $b(\beta_1)$ is a decreasing function, and
they satisfy $f(a(\beta_1))=f(b(\beta_1))=-\beta_1$. The
restrictions on $u_1$ and $u_2$ yield restrictions on $\beta_2$,
and we have $l'(u_1)<0$ for $\beta_2<m(a(\beta_1))$ and
$l'(u_2)>0$ for $\beta_2>m(b(\beta_1))$. As $\beta_1\rightarrow
-\infty$, $m(a(\beta_1))\rightarrow \infty$ and
$m(b(\beta_1))\rightarrow \infty$. $m(a(\beta_1))$ and
$m(b(\beta_1))$ are both decreasing functions of $\beta_1$, and
they satisfy $l'(u_1)=0$ when $\beta_2=m(a(\beta_1))$ and
$l'(u_2)=0$ when $\beta_2=m(b(\beta_1))$. As $l'(u_2)>l'(u_1)$ for
every $(\beta_1, \beta_2)$, the curve $m(b(\beta_1))$ must lie
below the curve $m(a(\beta_1))$, and together they generate the
bounding curves of the $V$-shaped region in the $(\beta_1,
\beta_2)$ plane with corner point $(\beta_1^c, \beta_2^c)$ where
two local maximizers exist for $l(u)$. See Figures \ref{Vshape},
\ref{lower} and \ref{upper}.

\begin{figure}
\centering
\includegraphics[clip=true, width=6in, height=2.5in]{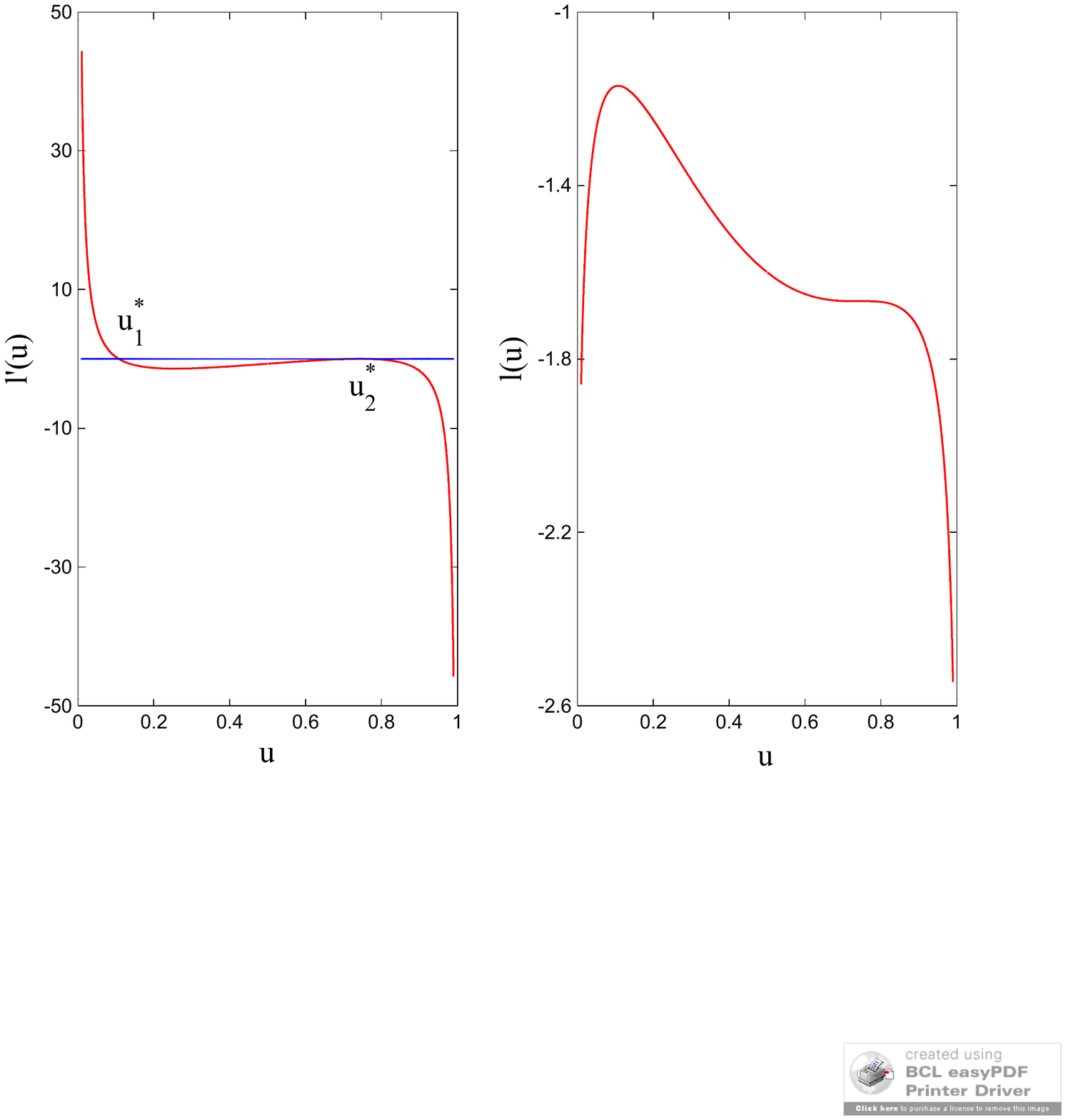}
\caption{Along the lower bounding curve $m(b(\beta_1))$ of the
V-shaped region, $l'(u)$ has two zeros $u_1^*$ and $u_2^*$, but
only $u_1^*$ is the global maximizer for $l(u)$. Graph drawn for
$\beta_1=-5.7$, $\beta_2=5$, and $p=2$.} \label{lower}
\end{figure}

Fix an arbitrary $\beta_1<\beta_1^c$, we examine the effect of
varying $\beta_2$ on the graph of $l'(u)$. It is clear that
$l'(u)$ shifts upward as $\beta_2$ increases and downward as
$\beta_2$ decreases. As a result, as $\beta_2$ gets large, the
positive area bounded by the curve $l'(u)$ increases, whereas the
negative area decreases. By the fundamental theorem of calculus,
the difference between the positive and negative areas is the
difference between $l(u_2^*)$ and $l(u_1^*)$, which goes from
negative ($l'(u_2)=0$, $u_1^*$ is the global maximizer) to
positive ($l'(u_1)=0$, $u_2^*$ is the global maximizer) as
$\beta_2$ goes from $m(b(\beta_1))$ to $m(a(\beta_1))$. Thus there
must be a unique $\beta_2$: $m(b(\beta_1))<\beta_2<m(a(\beta_1))$
such that $u_1^*$ and $u_2^*$ are both global maximizers, and we
denote this $\beta_2$ by $r(\beta_1)$. See Figures \ref{Vshape}
and \ref{along}. The parameter values of $(\beta_1, r(\beta_1))$
are exactly the ones for which positive and negative areas bounded
by $l'(u)$ equal each other. An increase in $\beta_1$ induces an
upward shift of $l'(u)$, and may be balanced by a decrease in
$\beta_2$. Similarly, a decrease in $\beta_1$ induces a downward
shift of $l'(u)$, and may be balanced by an increase in $\beta_2$.
This justifies that $r(\beta_1)$ is monotonically decreasing in
$\beta_1$.

The rest of the proof follows as in the proof of the corresponding
result (Theorem 2.1) in Radin and Yin \cite{RY}, where some
probability estimates were used. A (jump) discontinuity in the
first derivatives of $\psi_\infty^\beta$ across the curve
$\beta_2=r(\beta_1)$ indicates a discontinuity in the expected
local densities (\ref{E}), while the divergence of the second
derivatives of $\psi_\infty^\beta$ at the critical point
$(\beta_1^c, \beta_2^c)$ implies that the covariances of the local
densities go to zero more slowly than $1/n^2$ (\ref{Cov}). We omit
the proof details.
\end{proof}

\begin{remark}
The maximization problem (\ref{smax}) is solved at a unique value
$u^*$ off the phase transition curve $\beta_2=r(\beta_1)$, and at
two values $u_1^*$ and $u_2^*$ along the curve. As
$\beta_1\rightarrow -\infty$ (resp. $\beta_2\rightarrow \infty$),
$u_1^*\rightarrow 0$ and $u_2^*\rightarrow 1$. The jump from
$u_1^*$ to $u_2^*$ is quite noticeable even for small parameter
values of $\beta$. For example, taking $p=2$, $\beta_1=-5$, and
$\beta_2=5$, numerical computations yield that $u_1^* \approx
0.137$ and $u_2^* \approx 0.863$ with $l(u_1^*)\approx l(u_2^*)
\approx -1.0854$.
\end{remark}

\begin{figure}
\centering
\includegraphics[clip=true, width=6in, height=2.5in]{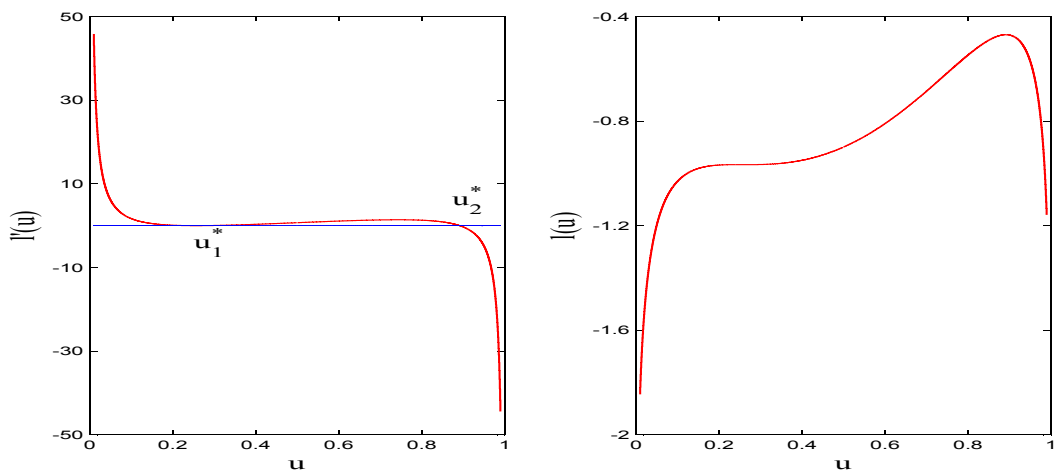}
\caption{Along the upper bounding curve $m(a(\beta_1))$ of the
V-shaped region, $l'(u)$ has two zeros $u_1^*$ and $u_2^*$, but
only $u_2^*$ is the global maximizer for $l(u)$. Graph drawn for
$\beta_1=-4.3$, $\beta_2=5$, and $p=2$.} \label{upper}
\end{figure}

\begin{proposition}
The Cram\'{e}r function $I(u)$ (\ref{Iu}) for the uniform
distribution on $(0, 1)$ is symmetric about the line
$u=\frac{1}{2}$.
\end{proposition}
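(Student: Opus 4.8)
The plan is to exploit the symmetry of the uniform distribution on $(0,1)$ under the reflection $x \mapsto 1-x$, which transfers to a functional equation for the moment generating function $M(\theta) = (e^\theta-1)/\theta$ and thence to the claimed symmetry of its Legendre transform $I$. The target identity is $I(u) = I(1-u)$ for all $u$ in the domain of $I$.

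First I would record the key functional equation. Since
\[
M(-\theta) = \frac{e^{-\theta}-1}{-\theta} = \frac{1-e^{-\theta}}{\theta} = e^{-\theta}\,\frac{e^\theta-1}{\theta} = e^{-\theta}M(\theta),
\]
taking logarithms gives $\log M(-\theta) = \log M(\theta) - \theta$ for every $\theta \in \mathbb{R}$. Equivalently, if $X \sim \mathrm{Unif}(0,1)$ then $1-X$ has the same law, so $\mathbb{E}[e^{\theta(1-X)}] = \mathbb{E}[e^{\theta X}]$, and since the left-hand side equals $e^\theta M(-\theta)$ while the right-hand side equals $M(\theta)$, one arrives at the same relation. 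Either route establishes the functional equation without needing a closed form for $M$.

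Next I would substitute this into the variational formula (\ref{Iu}). Starting from $I(1-u) = \sup_\theta\bigl(\theta(1-u) - \log M(\theta)\bigr)$ and replacing the dummy variable $\theta$ by $-\theta$ (a bijection of $\mathbb{R}$, so the supremum is unchanged), I obtain $I(1-u) = \sup_\theta\bigl(-\theta(1-u) - \log M(-\theta)\bigr)$. Applying the functional equation to rewrite $\log M(-\theta)$ as $\log M(\theta) - \theta$ and simplifying the exponent, the $-\theta$ arising from $-\theta(1-u)$ and the $+\theta$ arising from $-\log M(-\theta)$ cancel, leaving $I(1-u) = \sup_\theta\bigl(\theta u - \log M(\theta)\bigr) = I(u)$, which is the assertion.

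The computation is entirely elementary, so I do not anticipate a genuine obstacle; the only point requiring a moment of care is ensuring that the change of variable $\theta \mapsto -\theta$ is a bijection of the domain over which the supremum is taken. Here this is immediate because, under the finite-support (indeed bounded-support) hypothesis, $M(\theta)$ is finite for all $\theta \in \mathbb{R}$, so the supremum genuinely ranges over all of $\mathbb{R}$ and is preserved under the reflection.
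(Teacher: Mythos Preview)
Your proof is correct and rests on the same key identity as the paper, namely $\log M(-\theta)=\log M(\theta)-\theta$ (equivalently, $1-X\overset{d}{=}X$). The only difference is cosmetic: the paper works with the dual optimizers $\theta_1=I'(u)$ and $\theta_2=I'(1-u)$ and verifies by direct computation that taking $\theta_2=-\theta_1$ makes the two Legendre expressions agree, whereas you perform the substitution $\theta\mapsto-\theta$ directly inside the supremum---slightly cleaner, since it sidesteps any appeal to uniqueness of the optimizer.
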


\begin{proof}
Recall that $I(u)$ is the Legendre transform of the log moment
generating function $\log M(\theta)$ for the uniform $(0, 1)$
distribution. Take $0<u<\frac{1}{2}$. Though the exact form of $I$
is not obtainable, following the duality principle, we have
\begin{equation}
I(u)=\theta_1 u-\log\frac{e^{\theta_1}-1}{\theta_1},
\end{equation}
\begin{equation*}
I(1-u)=\theta_2 (1-u)-\log\frac{e^{\theta_2}-1}{\theta_2},
\end{equation*}
where $\theta_1$ and $\theta_2$ are both finite and uniquely
defined, and satisfy $I'(u)=\theta_1$ and $I'(1-u)=\theta_2$. We
show that $I(u)=I(1-u)$ when $\theta_2=-\theta_1$. The calculation
is straightforward:
\begin{equation}
I(1-u)=-\theta_1 (1-u)-\log\frac{e^{-\theta_1}-1}{-\theta_1}
=\theta_1 u-\log\frac{e^{\theta_1}-1}{\theta_1}=I(u).
\end{equation}
This verifies our claim.
\end{proof}

\begin{corollary}
Take $H_1$ a single edge and $H_2$ a finite simple graph with
$p\geq 2$ edges. The associated phase transition curve
$\beta_2=r(\beta_1)$ lies above the straight line
$\beta_2=-\beta_1$ when $p\geq 3$, and is exactly portion of the
straight line $\beta_2=-\beta_1$ ($\beta_1\leq -3$) when $p=2$.
\end{corollary}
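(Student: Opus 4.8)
The plan is to characterize the phase transition curve $\beta_2=r(\beta_1)$ through the \emph{Maxwell equal-area} condition used in the proof of Theorem \ref{phase}: for $(\beta_1,\beta_2)$ in the $V$-shaped region the two local maximizers $u_1^*<u_2^*$ of $l(u)=\beta_1 u+\beta_2 u^p-\frac{1}{2}I(u)$ are both global maximizers precisely when $l(u_1^*)=l(u_2^*)$, equivalently when $\int_{u_1^*}^{u_2^*}l'(u)\,du=0$. Since raising $\beta_2$ shifts $l'$ upward and therefore strictly increases $l(u_2^*)-l(u_1^*)$, the sign of $l(u_2^*)-l(u_1^*)$ at a test point decides on which side of $r(\beta_1)$ it lies: the point sits below the curve, so that $r(\beta_1)>\beta_2$, exactly when $l(u_1^*)>l(u_2^*)$. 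I would feed into this the symmetry of $I$ just established, namely $I(u)=I(1-u)$ and hence $I'(u)=-I'(1-u)$.

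For $p=2$ I restrict to the line $\beta_2=-\beta_1=:\beta>0$ and write $l(u)=\beta(u^2-u)-\frac{1}{2}I(u)$. Since $u^2-u$ is symmetric about $u=\frac{1}{2}$ and, by the preceding Proposition, so is $I$, the whole function $l$ is symmetric about $u=\frac{1}{2}$. Thus wherever two local maximizers exist they form a reflected pair $u^*,\,1-u^*$ of equal value, the equal-area condition holds automatically, and every such line point lies on $r(\beta_1)$. To pin the endpoint, note from Tables \ref{table1} and \ref{table2} (with $u_0=\frac{1}{2}$) that the corner of the $V$ is $(\beta_1^c,\beta_2^c)=(-f(u_0),m(u_0))=(-3,3)$, which lies on the line, while two maximizers occur only for $\beta_1<\beta_1^c$. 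This produces exactly the portion $\{\beta_2=-\beta_1:\beta_1\le-3\}$.

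For $p\ge 3$ the symmetry breaks, and I would quantify the defect by $l(u)-l(1-u)=\beta D(u)$ with $D(u)=u^p-(1-u)^p-(2u-1)$. One checks $D(0)=D(\tfrac{1}{2})=D(1)=0$, that $D$ is antisymmetric about $\frac{1}{2}$, and that $D'(u)=p\,[u^{p-1}+(1-u)^{p-1}]-2$ is positive near the endpoints ($D'(0)=p-2>0$) and negative at the center ($D'(\tfrac{1}{2})=p(\tfrac{1}{2})^{p-2}-2<0$) for $p\ge 3$; since $u^{p-1}+(1-u)^{p-1}$ is symmetric and convex, $D$ is positive on $(0,\frac{1}{2})$ and negative on $(\frac{1}{2},1)$. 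Hence $l(u)>l(1-u)$ for every $u<\frac{1}{2}$: along the line $l$ is tilted toward small $u$, favoring the left maximizer. The plan is to convert this tilt into $l(u_1^*)>l(u_2^*)$ whenever two maximizers exist on the line, placing the line strictly below $r(\beta_1)$, i.e. $r(\beta_1)>-\beta_1$. Where the line lies below the $V$-region there is only the small-$u$ maximizer and the conclusion is immediate; in particular the corner $(-f(u_0),m(u_0))$ satisfies $m(u_0)>f(u_0)$ for $p\ge 3$ (Tables \ref{table1}, \ref{table2}), so it sits strictly above the line and the line is below the $V$-region near the endpoint.

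The main obstacle is the regime where the line genuinely enters the $V$-region and the two maximizers are close to $u_0>\frac{1}{2}$, hence possibly both to the right of $\frac{1}{2}$, where the reflection comparison no longer orders $l(u_1^*)$ and $l(u_2^*)$ directly. I would resolve this either by a local expansion of $l$ about the degenerate critical point at the corner to fix the sign of $l(u_1^*)-l(u_2^*)$ just inside the $V$, and then propagate it using the monotonicity of $l(u_2^*)-l(u_1^*)$ in $\beta_2$ together with the global tilt to rule out any crossing of the two decreasing curves $r(\beta_1)$ and $\beta_2=-\beta_1$; or, equivalently, by supposing a crossing at a line point, recording the two critical-point relations in the reflected ``left-side'' variables $u_1^*$ and $1-u_2^*$ (both in $(0,\frac{1}{2})$) via $I'(u)=-I'(1-u)$, and showing these cannot be reconciled with equal areas once $p\ge 3$. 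Making either route rigorous near the corner is the crux; the far-field behavior, where $u_1^*\to 0$ and $u_2^*\to 1$ as $\beta_1\to-\infty$, follows readily from the tilt. The stated Corollary then reads off from the sign of $r(\beta_1)+\beta_1$.
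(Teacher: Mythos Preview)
Your approach coincides with the paper's: exploit the symmetry of $I$ to show that on the line $\beta_2=-\beta_1$ the function $l$ is symmetric about $\tfrac12$ when $p=2$, and tilted toward small $u$ when $p\ge 3$. Your $p=2$ argument is correct and matches the paper. For $p\ge 3$ you also correctly establish the key inequality $l(u)>l(1-u)$ for $u\in(0,\tfrac12)$ (the paper simply asserts the equivalent $u-u^p<(1-u)-(1-u)^p$; your analysis of $D(u)$ supplies a clean justification).

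Where you diverge from the paper is in the final step, and here you manufacture an obstacle that does not exist. The tilt $l(u)>l(1-u)$ for $u<\tfrac12$ already forces every global maximizer of $l$ on the line to lie in $(0,\tfrac12]$: if $v>\tfrac12$ were a global maximizer then $l(1-v)>l(v)$ would contradict maximality. In particular it is impossible for both local maximizers to sit to the right of $\tfrac12$, so your worried scenario never occurs. Since $u_2^*>u_0\ge\tfrac12$ always, $u_2^*$ cannot be a global maximizer on the line; hence either there is a unique local maximizer (line below the $V$-region, so below $r(\beta_1)$ trivially), or there are two with $l(u_1^*)>l(u_2^*)$, which by the monotonicity of $l(u_2^*)-l(u_1^*)$ in $\beta_2$ places $-\beta_1<r(\beta_1)$. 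The line cannot lie above the $V$-region, since there the unique maximizer exceeds $u_0\ge\tfrac12$, again contradicting the tilt. This is exactly the paper's (terse) route: it records that the global maximizer on the line is $\le\tfrac12$ and declares the result. No local expansion near the corner, and no contradiction argument in reflected variables, is needed.
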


\begin{proof}
From the proof of Theorem \ref{phase}, there are two global
maximizers $u_1^*$ and $u_2^*$ for $l(u)$ along the phase
transition curve $\beta_2=r(\beta_1)$, with $u_0\geq \frac{1}{2}$
and $0<u_1^*<u_0<u_2^*<1$. Furthermore, the $y$-coordinate
$\beta_2^c$ of the critical point $(\beta_1^c, \beta_2^c)$ is
always positive. On the straight line $\beta_1+\beta_2=0$, we
rewrite $l(u)=\beta_1(u-u^p)-\frac{1}{2}I(u)$. First suppose
$p=2$. Since $I(u)$ and $u-u^2$ are both symmetric about the line
$u=\frac{1}{2}$, two global maximizers $u_1^*$ and $u_2^*$ exist
for $l(u)$. Next consider the generic case $p>2$. Analytical
calculations give that $u-u^p<(1-u)-(1-u)^p$ for
$0<u<\frac{1}{2}$. Since $I(u)$ is symmetric, this says that for
$\beta_1<0$ (resp. $\beta_2>0$), the global maximizer $u^*$ of
$l(u)$ satisfies $u^*\leq \frac{1}{2}$. This implies the desired
result.
\end{proof}

\begin{figure}
\centering
\includegraphics[clip=true, width=6in, height=2.5in]{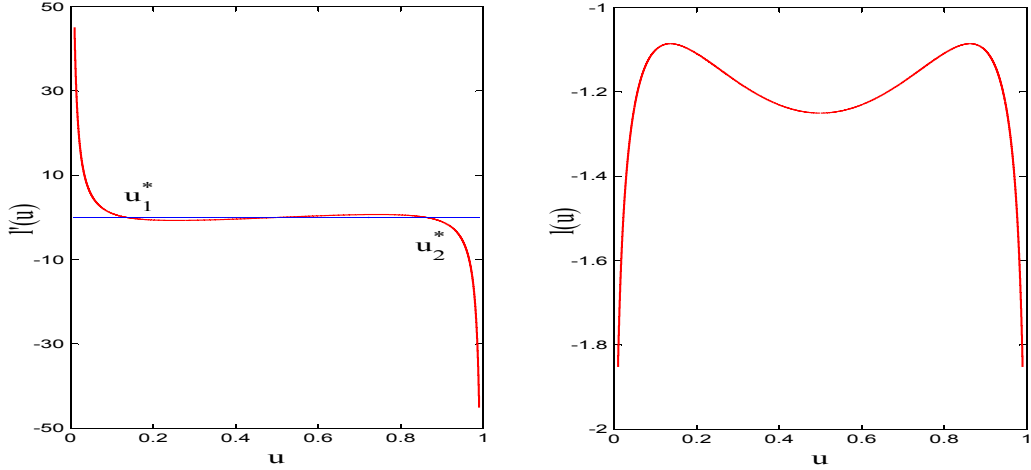}
\caption{Along the phase transition curve $r(\beta_1)$, $l(u)$ has
two local maximizers $u_1^*$ and $u_2^*$, and both are global
maximizers for $l(u)$. Graph drawn for $\beta_1=-5$, $\beta_2=5$,
and $p=2$.} \label{along}
\end{figure}

\section{Further discussion}
\label{discuss} But what if the common distribution $\mu$ on the
edge weights does not have finite support? Unfortunately, we do
not have a generic large deviation principle in this case. As an
example, we will examine the asymptotic phase structure for a
completely solvable exponential model and hope the analysis would
provide some inspiration. Let $G_n\in \mathcal{G}_n$ be an
edge-weighted directed labeled graph on $n$ vertices, where the
edge weights $x_{ij}$ from vertex $i$ to vertex $j$ are iid real
random variables whose common distribution $\mu$ is standard
Gaussian. As in the undirected case, the common distribution for
the edge weights yields probability measure $\PR_n$ and the
associated expectation $\ER_n$ on $\mathcal{G}_n$. Give the set of
such graphs the probability
\begin{equation}
\PR_n^\beta(G_n)=\exp\left(n^2\left(\beta_1e(G_n)+\beta_2s(G_n)-\psi_n^\beta\right)\right)\PR_n(G_n),
\end{equation}
where $\beta=(\beta_1, \beta_2)$ are $2$ real parameters, $H_1$ is
a directed edge, $H_2$ is a directed $2$-star, $e(G_n)$ and
$s(G_n)$ are respectively the directed edge and $2$-star
homomorphism densities of $G_n$,
\begin{equation}
e(G_n)=\frac{1}{n^2}\sum_{1\leq i,j\leq n} x_{ij}, \hspace{0.2cm}
s(G_n)=\frac{1}{n^3}\sum_{1\leq i,j,k \leq n} x_{ij}x_{ik},
\end{equation}
and $\psi_n^\beta$ is the normalization constant,
\begin{equation}
\label{dpsi} \psi_n^\beta=\frac{1}{n^2}\log
\ER_n\left(\exp\left(n^2\left(\beta_1e(G_n)+\beta_2s(G_n)\right)\right)\right).
\end{equation}
The associated expectation $\ER_n^\beta$ may be defined
accordingly, and a phase transition occurs when the limiting
normalization constant $\displaystyle \psi^\beta_\infty=\lim_{n\to
  \infty}\psi_n^{\beta}$ has a singular point.

Plugging the formulas for $e(G_n)$ and $s(G_n)$ into (\ref{dpsi})
and using the iid property of the edge weights, we have
\begin{eqnarray}
\psi_n^\beta&=&\frac{1}{n^2}\log
\ER_n\left(\exp\left(\beta_1\sum_{i=1}^n\left(\sum_{j=1}^n
x_{ij}\right)+\frac{\beta_2}{n}\sum_{i=1}^n\left(\sum_{j=1}^n
x_{ij}\right)^2\right)\right)\\
&=&\frac{1}{n}\log\ER \left(\exp\left(\beta_1
Y+\frac{\beta_2}{n}Y^2\right)\right),\notag
\end{eqnarray}
where $Y=\sum_{j=1}^n x_{1j}$ satisfies a Gaussian distribution
with mean $0$ and variance $n$, and $\ER$ is the associated
expectation. We compute, for $\beta_2<\frac{1}{2}$,
\begin{equation}
\ER \left(\exp\left(\beta_1
Y+\frac{\beta_2}{n}Y^2\right)\right)=\int_{-\infty}^\infty
e^{\beta_1y+\frac{\beta_2}{n}y^2}\frac{1}{\sqrt{2\pi
n}}e^{-\frac{y^2}{2n}}dy
=\frac{1}{\sqrt{1-2\beta_2}}e^{\frac{n\beta_1^2}{2(1-2\beta_2)}},
\end{equation}
which implies that
\begin{equation}
\psi_\infty^\beta=\frac{\beta_1^2}{2(1-2\beta_2)}.
\end{equation}
This is a smooth function in terms of the parameters $\beta_1$ and
$\beta_2$, and so $\psi_\infty^\beta$ does not admit a phase
transition.

\section*{Acknowledgements}
This work originated at the Special Session on Topics in
Probability at the 2016 AMS Western Spring Sectional Meeting,
organized by Tom Alberts and Arjun Krishnan. Mei Yin's research
was partially supported by NSF grant DMS-1308333. She thanks Sean
O'Rourke and Lingjiong Zhu for helpful conversations.

\end{document}